\documentclass[a4paper,11pt]{article}
\usepackage{amsmath}
\usepackage{amsthm}
\usepackage{amssymb}
\usepackage{mathtools}
\usepackage{mathrsfs}
\usepackage{physics}
\usepackage{enumitem}
\usepackage{stmaryrd} % double brackets
% properly handle spaces after defines
\usepackage{xspace}
% comment the next line before the submission the paper
\usepackage[margin=1.0in]{geometry}
\usepackage[ruled,vlined]{algorithm2e}
%cancel
\usepackage{cancel}
% Reference
%\usepackage[backend=biber,doi=false,isbn=false,url=false,eprint=false,date=year]{biblatex}
%\addbibresource{references.bib} 
% Always put hyperref at the last so that it does not clash with other packages!!!
\usepackage[hidelinks]{hyperref}

%%--------------------------- 
%% Packages for editing
%%---------------------------
%% For showing line numbers
%\usepackage{lineno}
%\linenumbers
%% For showing labels (useful for draft versions)
%\usepackage{showlabels}

\newcommand\N{\mathbb N}

\newcommand\R{\mathbb R}

\newcommand\PP{\mathbb P}
\newcommand\E{\mathbb E}
\newcommand\T{\mathbb T}

\newcommand\Ind{\boldsymbol 1}

% QED Symbol

% ceil and floor function

\DeclarePairedDelimiter{\floor}{\lfloor}{\rfloor}

% KL Divergence
\DeclarePairedDelimiterX{\infdivx}[2]{(}{)}{%
  #1\;\delimsize|\delimsize|\;#2%
}
%\newcommand{\kld}[2]{\ensuremath{D_{KL}\infdivx{#1}{#2}}\xspace}

% Pr and cPr
\newcommand{\PR}[1]{\PP\left(#1\right)}

% EX and cEX
\newcommand{\EX}[1]{\E\left[#1\right]}

% VAR

% const

% todo
\usepackage{xcolor}

\theoremstyle{plain}
\newtheorem{theorem}{Theorem}[section]
\newtheorem*{theorem*}{Theorem}
\newtheorem{lemma}[theorem]{Lemma}
\newtheorem*{lemma*}{Lemma}

\newtheorem*{corollary*}{Corollary}
\newtheorem{proposition}[theorem]{Proposition}
\newtheorem*{proposition*}{Proposition}

\newtheorem*{fact*}{Fact}

\theoremstyle{definition}

\newtheorem{definition*}[theorem]{Definition}
\newtheorem{assumption}[theorem]{Assumption}

\theoremstyle{remark}
\newtheorem{remark}[theorem]{Remark}
\newtheorem*{remark*}{Remark}

\numberwithin{equation}{section}

% Line breaks for multiline equations
\allowdisplaybreaks

\begin{document}

\title{Negative moments of the CREM partition function in the high temperature regime}
\author{
Fu-Hsuan~Ho\thanks{
  Institut de Math\'{e}matiques de Toulouse, CNRS UMR5219.
  \textit{Postal address:} Institut de Math\'{e}matiques de Toulouse,
  Universit\'{e} de Toulouse,
  118 Route de Narbonne, 31062 Toulouse Cedex 9, France.
  \textit{Email:} \texttt{fu-hsuan.ho AT math.univ-toulouse.fr}
  }
%\and
%Pascal~Maillard\footnotemark[1]
}
\date{\today}
\maketitle
\begin{abstract}
The continuous random energy model (CREM) was introduced by Bovier and Kurkova in 2004 which can be viewed as a generalization of Derrida's generalized random energy model. Among other things, their work indicates that there exists a critical point $\beta_c$ such that the partition function exhibits a phase transition. The present work focuses on the high temperature regime where $\beta<\beta_c$. We show that for all $\beta<\beta_c$ and for all $s>0$, the negative $s$ moment of the CREM partition function is comparable with the expectation of the CREM partition function to the power of $-s$, up to constants that are independent of $N$.

\paragraph{Keywords:} branching random walk; continuous random energy model; Gaussian process; negative moments.  
 
\paragraph{MSC2020 subject classifications:} 60K35; 60G15; 60J80. 
% 60K35: Interacting random processes; statistical mechanics type models; percolation theory [See also 82B43, 82C43]
% 60G15: Gaussian processes
% 60J80: Branching processes (Galton-Watson, birth-and-death, etc.)
\end{abstract}

\section{Introduction} \label{sec:intro}

In this paper, we consider the continuous random energy model (CREM) introduced by Bovier and Kurkova in \cite{CREM04} based on previous work by Derrida and Spohn \cite{DerridaSpohn}. The model is defined as follows: Let $N\in\N$. Denote by $\T_N$ the binary tree with depth $N$. Given $u\in\T_N$, we denote by $\abs{u}$ the depth of $u$, and we write $w\leq u$ if $w$ is an ancestor of $u$. For all $u,v\in\T_N$, let $u\wedge v$ be the most recent common ancestor of $u$ and $v$. The CREM is a centered Gaussian process $(X_u)_{u\in\T_N}$ indexed by the binary tree $\T_N$ of depth $N$ with covariance function
\begin{align*}
\EX{X_vX_w} = N\cdot A\left(\frac{\abs{v\wedge w}}{N}\right), \quad \forall v,w\in \T_N.
\end{align*}

To study the CREM, one of the key quantities is the partition function defined as
\begin{align}
\label{chap3.eq:par func}
Z_{\beta,N} \coloneqq \sum_{\abs{u}=N} e^{\beta X_u}.
\end{align}
In this paper, we study the negative moments of the partition function which gives us information on the small values of the partition function. This type of study was conducted in the context of other related models such as the homogeneous branching random walks, the multiplicative cascades, or the Gaussian multiplicative chaos, and came with the name of \emph{negative moments}, \emph{left tail} behavior, \emph{small deviations} of the partition function. We give a survey of these results in Section~\ref{sec:proof strategy}. 

\subsection{Main result} \label{sec:main result}

For this paper, we require that the function $A$ satisfies the following assumption.
\begin{assumption} 
\label{chap3.assumption}
We suppose that the function $A$ is a non-decreasing function defined on the interval $[0,1]$ such that $A(0)=0$ and $A(1)=1$. Let $\hat{A}$ be the concave hull of $A$, and we denote by $\hat{A}'$ the right derivative of $\hat{A}$. Throughout this paper, we assume the following regularity conditions.
\begin{enumerate}[label = (\roman*)]
    \item The function $A$ is differentiable in a neighborhood of $0$, i.e., there is an $x_0\in (0,1]$ such that $A$ is differentiable on the interval $(0,x_0)$. Furthermore, we assume that $A$ has a finite right derivative at $0$.
    \item There exists $\alpha\in (0,1)$ such that the derivative of $A$ is locally H\"{o}lder continuous with exponent $\alpha$ in a neighborhood of $0$, i.e., there exists $x_1\in (0,1]$ such that 
    \[\sup_{\substack{x,y\in [0,x_1] \\ x\neq y}} \frac{\abs{A'(x)-A'(y)}}{\abs{x-y}^\alpha}<\infty.\]
    \item The right derivative of $\hat{A}$ at $x=0$ is finite, i.e., $\hat{A}'(0) < \infty$.
\end{enumerate}
\end{assumption}
\begin{remark}
    Point (i)---(iii) are technical conditions that are required in the moment estimates perform in Section~\ref{chap3.sec:useful} and Section~\ref{chap3.sec:moment estimates} so that the estimates only depend on $\hat{A}'(0)$.
\end{remark}
% The CREM was introduced by Bovier and Kurkova in \cite{CREM04} based on previous work by Derrida and Spohn \cite{DerridaSpohn}. 
% To study the CREM, one of the key quantity is the partition function. The partition function of the CREM is defined as
% \begin{align}
% \label{chap3.eq:par func}
% Z_{\beta,N} \coloneqq \sum_{\abs{u}=N} e^{\beta X_u}.
% \end{align}
The free energy of the CREM is defined as 
\begin{align*}
F_\beta \coloneqq \lim_{N\rightarrow\infty} \frac{1}{N}\EX{\log Z_{\beta,N}}.
\end{align*}
The free energy $F_\beta$ admits an explicit expression. Namely, for all $\beta\geq 0$,
\begin{align}
F_\beta = \int_0^1 f\left(\beta\sqrt{\hat{A}'(t)}\right) \dd{t}, 
\quad \text{where} \quad
f(x) \coloneqq 
\begin{cases}
\displaystyle \frac{x^2}{2} + \log 2, & x< \sqrt{2\log 2}, \\
\\
\sqrt{2\log 2}x, & x\geq \sqrt{2\log 2}.
\end{cases}
\label{chap3.eq:F_beta}
\end{align}
Formula \eqref{chap3.eq:F_beta} was proven by Bovier and Kurkova in Theorem 3.3 of \cite{CREM04}, based on a Gaussian comparison lemma (Lemma 3.2 of \cite{CREM04}) and previous work of Capocaccia, Cassandro and Picco \cite{GREMFreeEnergy} in the 1980s. While Bovier and Kurkova assumed the function $A$ to be piecewise smooth (Line 4 after (1.2) in \cite{CREM04}) to simplify their article, the proof of Theorem 3.3 of \cite{CREM04} does not use this regularity assumption and the proof also holds for the class of functions that are non-decreasing on the interval $[0,1]$ such that $A(0)=0$ and $A(1)=1$.

In the same paper,  Bovier and Kurkova also showed that the maximum of the CREM satisfies the following.
\begin{align}
\lim_{N\rightarrow\infty }\frac{1}{N} \EX{\max_{\abs{u}=N} X_u} = \sqrt{2\log 2} \int_0^1 \sqrt{\hat{A}'(t)} \dd{t}.
\label{chap3.eq:max}
\end{align}
Combining \eqref{chap3.eq:F_beta} and \eqref{chap3.eq:max} indicates that there exists 
\begin{align}
    \beta_c \coloneqq \frac{\sqrt{2\log2}}{\sqrt{\hat{A}'(0)}} \label{chap3.eq:threshold}
\end{align}
such that the following phase transition occurs. a) For all $\beta< \beta_c$, the main contribution to the partition function comes from an exponential amount of the particles. b) For all $\beta>\beta_c$, the maximum starts to contribute significantly to the partition function. The quantity $\beta_c$ is sometimes referred to as the static critical inverse temperature of the CREM. In the following, we refer to the subcritical regime $\beta<\beta_c$ as the high temperature regime.

Our goal is to study the negative moments of the partition function $Z_{\beta,N}$ in the high temperature regime, and we obtain the following result.
\begin{theorem}
\label{chap3.thm:main}
Suppose Assumption~\ref{chap3.assumption} is true. Let $\beta<\beta_c$. For all $s>0$, there exist $N_0 = N_0(A,\beta,s)\in\N$ and a constant $C=C(A,\beta,s)$, independent of $N$, such that for all $N\geq N_0$, 
\[\EX{(Z_{\beta,N})^{-s}} \leq C\EX{Z_{\beta,N}}^{-s}.\]
\end{theorem}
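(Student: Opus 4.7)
My plan is a truncated second moment computation giving a uniform (in $N$) constant-probability lower bound on $Z_{\beta,N}/\EX{Z_{\beta,N}}$, amplified into a stretched-exponential small-deviation bound via the branching structure of the CREM, and finally converted into the negative moment estimate through the representation $\EX{Z_{\beta,N}^{-s}} = s\int_0^\infty u^{-s-1} \PR{Z_{\beta,N}\le u}\dd u$.

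First, using $A(1)=1$, one has $\EX{Z_{\beta,N}} = 2^N e^{\beta^2 N/2}$. I would introduce a truncated partition function
\[
\tilde Z_{\beta,N} \coloneqq \sum_{|u|=N} e^{\beta X_u}\,\Ind_{\{X_{u_k} \le \beta\hat A'(0)\,k + \Delta_N,\ \forall k\le N\}},
\]
where $\Delta_N$ is a moderate correction (e.g.\ $N^{1/2+\eta}$) and the linear slope matches the Gibbs-typical rate of climb predicted by the concave hull. A many-to-one first moment computation, using Assumption~\ref{chap3.assumption}(i)--(iii) to compare $A$ with $\hat A$ near $0$ and $\beta<\beta_c$ to absorb the Gaussian cost, should give $\EX{\tilde Z_{\beta,N}} \ge c\,\EX{Z_{\beta,N}}$. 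The core estimate is the second moment: decomposing over the overlap $q = |u\wedge v|/N$ produces
\[
\EX{\tilde Z_{\beta,N}^{2}} = \sum_{q} e^{\beta^{2} N A(q)}\,P_{N}(q),
\]
where $P_N(q)$ is the probability that two centered Gaussian paths with covariance $N A(\cdot)$ and common past of length $qN$ respect the barrier. The barrier is tuned so that $P_N(q)$ decays fast enough to neutralize the blow-up of $e^{\beta^{2} N A(q)}$ as $q\to 1$, which uses $\beta<\beta_c$ critically and the H\"older/differentiability control of $A$ near $0$ to ensure constants are uniform in $N$. Paley--Zygmund then yields $\PR{Z_{\beta,N} \ge c'\,\EX{Z_{\beta,N}}} \ge p_0 > 0$ independently of $N$.

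To upgrade this into a strong tail, I would cut the tree at an intermediate depth $m=m(N)$ and exploit the CREM covariance identities: a direct calculation gives $\EX{X_{v_1}(X_{u_2}-X_{v_2})}=0$ and $\EX{(X_{u_1}-X_{v_1})(X_{u_2}-X_{v_2})}=0$ for distinct depth-$m$ nodes $v_1,v_2$ with descendants $u_i \ge v_i$. This implies that the subtree increment processes are mutually independent (and independent of $(X_v)_{|v|=m}$); each one is a rescaled CREM on depth $N-m$ that still satisfies Assumption~\ref{chap3.assumption}, with critical slope $A'(m/N)\to \hat A'(0)$ as $m/N\to 0$, hence remains subcritical. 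Applying the previous Paley--Zygmund bound to the $2^m$ independent subtrees yields
\[
\PR{Z_{\beta,N} \le t\,\EX{Z_{\beta,N}}} \le (1-p_0)^{2^m} + \PR{\text{top } m \text{ levels atypical}},
\]
and both terms can be made stretched-exponentially small in $1/t$ by choosing $m$ a slowly growing function of $1/t$. Inserting this tail into the integral representation and splitting at $u = c'\EX{Z_{\beta,N}}$ produces the claim: the large-$u$ piece is trivially $O(\EX{Z_{\beta,N}}^{-s})$, and the small-$u$ piece is controlled by the tail bound.

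The main obstacle is the truncated second moment in the second paragraph. The truncation barrier must be simultaneously tight enough to kill the near-diagonal $q\to 1$ contribution and loose enough to retain a constant fraction of the total mass, and the resulting constants must depend only on $\hat A'(0)$ rather than on finer features of $A$, so that the bound propagates unchanged through the subtree recursion. This is precisely where the regularity hypotheses (i)--(iii) of Assumption~\ref{chap3.assumption} are essential.
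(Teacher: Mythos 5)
Your overall architecture is the same as the paper's (truncated first/second moment plus a Paley--Zygmund step to get a constant-probability lower bound, amplification through independent subtrees below a small depth, then conversion to negative moments), but the core estimate as you have set it up fails on part of the regime $\beta<\beta_c$. The problem is your choice of barrier $X_{u_k}\le \beta\hat A'(0)k+\Delta_N$. The dangerous overlaps are not only $q\to 1$: for $\beta$ near $\beta_c$, every $q$ with $A(q)>\tfrac12\hat A'(0)q$ makes $2^{-qN}e^{\beta^2NA(q)}$ blow up, and your straight-line barrier only caps the $2\beta$-tilted common path at height $\beta\hat A'(0)qN$, not at its actual mean $2\beta N A(q)$. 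Writing $A(q)=\theta\hat A'(0)q$, the endpoint cost of staying below your barrier is $\exp\left(-\tfrac{(2\theta-1)^2}{2\theta}\beta^2\hat A'(0)qN\right)$, so the overlap-$q$ contribution to $\EX{\tilde Z_{\beta,N}^2}/\EX{\tilde Z_{\beta,N}}^2$ has exponential rate $q\left[\left(\theta-\tfrac{(2\theta-1)^2}{2\theta}\right)\beta^2\hat A'(0)-\log 2\right]$, and the bracket is maximized at $\theta=1/\sqrt2$ where it equals $(2-\sqrt2)\beta^2\hat A'(0)-\log2$. This is strictly positive once $\beta^2\hat A'(0)>\tfrac{2+\sqrt2}{2}\log 2$, i.e. for all $\beta\gtrsim 0.92\,\beta_c$, and one can build admissible $A$ (satisfying Assumption~\ref{chap3.assumption}, with $A$ dipping below the line $\hat A'(0)x$ at a macroscopic scale and a matching lower bound for the barrier probability via a straight-line path) for which the truncated second moment is then exponentially large, so Paley--Zygmund gives nothing. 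The fix is exactly the paper's barrier: make the truncation follow the curve, $X^{(k)}_n\le \beta N\left(A\left(\tfrac{n+k}{N}\right)-A\left(\tfrac{k}{N}\right)\right)+an+b$, so that after the $2\beta$-tilt the exceedance is always $\beta N A(\cdot)$ itself and the exponent $\beta^2NA(q)$ is cut in half uniformly in $q$, leaving $2^{-\ell}e^{(\frac12\beta^2\hat A'(0)+\beta a)\ell}$, summable for all $\beta<\beta_c$ with $a$ small.

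Two secondary points. First, in your amplification step the "top $m$ levels atypical" term is not stretched-exponentially small in $1/t$: the subtree achieving $Z_v\ge c'\EX{Z_v}$ is random, so you must control $\min_{|v|=m}X_v$ by a union bound, which yields only a bound polynomial in $t$ (with an exponent you can push above $s$ by enlarging the deviation scale, at the price of an $s$-dependent constant -- allowed here, but the claim as written is too strong). Moreover $m(1/t)$ cannot exceed $N$, so the extreme lower tail still needs a separate argument (the paper uses the single-particle bound $Z_{\beta,N}\ge e^{\beta X_u}$ and a Gaussian tail estimate there). Second, "the subtree remains subcritical because $A'(m/N)\to\hat A'(0)$" is not the right justification: $A'(m/N)\to A'(0)$, which may be strictly smaller than $\hat A'(0)$, and what matters is the concave-hull slope at $0$ of the shifted function $x\mapsto A(m/N+x)-A(m/N)$; this is controlled uniformly only because $m/N\to0$, using the global bound $A(x)\le\hat A'(0)x$ together with the local regularity in Assumption~\ref{chap3.assumption} -- which is why the paper restricts the recursion depth to $K=\floor{2\log_\gamma N}$ and keeps all constants depending on $\hat A'(0)$ only.
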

\begin{remark}
For all $\beta>0$, $N\in\N$ and $s>0$, we have the trivial lower bound provided by the Jensen inequality and the convexity of $x\mapsto x^{-s}$,
\begin{align}
\EX{(Z_{\beta,N})^{-s}} \geq \EX{Z_{\beta,N}}^{-s}. \label{chap3.eq:main.lowerbound}
\end{align} 
Thus, combining \eqref{chap3.eq:main.lowerbound} with Theorem~\ref{chap3.thm:main}, we see that for all $\beta<\beta_c$ in the high temperature regime and for all $s>0$, $\EX{(Z_{\beta,N})^{-s}}$ is comparable with $\EX{Z_{\beta,N}}^{-s}$.
%Another remark is that the quantity $18/19$ is not optimal. In fact, we expect that in the high temperature regime, Theorem~\ref{chap3.thm:main} should hold for all $s>0$.
\end{remark}

\subsection{Related models} \label{sec:proof strategy}

For the (homogeneous) branching random walk, the typical approach is not to study the partition function directly but to consider the additive martingale 
\begin{align*}
W_{\beta,N}=Z_{\beta,N}/\EX{Z_{\beta,N}},   
\end{align*}
which converges to a pointwise limit $W_{\beta,\infty}$. A standard method to establish that $W_{\beta,\infty}$ has negative moments involves the following observation. Suppose that $Y'$, $Y''$, $W'$ and $W''$ are independent random variables such that $Y'$ and $Y''$ share the law of the increment of the branching random walk, and each of $W'$ and $W''$ follows the same distribution as $W_{\beta,\infty}$. Then, the limit of the additive martingale $W_{\beta,\infty}$ satisfies the following fixed point equation in distribution:
\begin{align}
W_{\beta,\infty} \stackrel{(d)}{=} Y'W' + Y''W''. \label{chap3.eq:smoothing}
\end{align}
Formula \eqref{chap3.eq:smoothing} is a special case of the so-called smoothing transform, which has been studied extensively in the literature. In the context of multiplicative cascades, which is an equivalent model of the branching random walk, Molchan showed in \cite{molchanScalingExponentsMultifractal1996} that if $\EX{(Y')^{-s}}<\infty$ for some $s>0$, then $\EX{(W_{\beta,\infty})^{-s}}<\infty$. Subsequently, Molchan's result was extended by Liu in \cite{Liu01, liuExtensionFunctionalEquation2002}. More recently, Hu studied in \cite{huHowBigMinimum2016} the small deviation of the maximum of the branching random walk based on Liu's result. On the other hand, Nikula provided small deviations for lognormal multiplicative cascades in \cite{nikulaSmallDeviationsLognormal2020}, thereby refining Molchan's result. 

In the context of the Gaussian multiplicative chaos, Garban, Holden, Sep\'{u}lveda and Sun showed in \cite{garbanNegativeMomentsGaussian2018} that given a subcritical GMC measure with mild conditions on the base measure, the total mass of this GMC measure has negative moments of all orders. Their works expanded on the findings of Robert and Vargas \cite{robertGaussianMultiplicativeChaos2010}, Duplantier and Sheffield \cite{duplantierLiouvilleQuantumGravity2011} and Remy \cite{remyFyodorovBouchaudFormula2020}, with the base measure taken as the Lebesgue measure restricted to some open set, where these authors were interested in quantifying the left tail behavior of the total mass of the GMC near $0$. A final remark is that Robert and Vargas referred the proof of Theorem 3.6 in their paper \cite{robertGaussianMultiplicativeChaos2010}, which concerned the negative moment of the total mass of the GMC, to Barral and Mandelbrot \cite{barralMultifractalProductsCylindrical2002}, where they studied a log-Poisson cascade.

%Molchan's result was also extended to the Gaussian multiplicative chaos (GMC) by Rhodes and Vargas in \cite{robertGaussianMultiplicativeChaos2010} \TODO{However, Rhodes and Vargas just claim that their result of the negative moments can be extended from Molchan without providing the proof.} 

\subsection{Proof strategy}

For the CREM in general, $W_{\beta,N}$ is not a martingale, so there is no obvious way to show that  $W_{\beta,\infty}$ exists. To prove Theorem~\ref{chap3.thm:main}, we adapt the proof of Lemma A.3 in the paper \cite{BenSchramm09} by Benjamini and Schramm where they study the multiplicative cascade.

While their argument also involved the smoothing transform, it can be adjusted for general CREM. In particular, we show that for all $s>0$, there exist two positive sequences $\varepsilon_k$ and $\eta_k$ that both decay double exponentially to $0$ as $k\rightarrow\infty$ such that for $N$ sufficiently large, 
\begin{align}
\PR{Z_{\beta,N}\leq \varepsilon_k\EX{Z_{\beta,N}}} \leq \eta_k, \label{chap3.eq:left tail}
\end{align} 
and that for all $k\in\llbracket 1, C'\log N\rrbracket$, where $C'>0$, there exist $C=C(A,\beta,s)>0$ and $c=c(A,\beta,s)>0$ such that
\begin{align}
\varepsilon_k^{-s}\eta_k\leq C e^{-c e^{c k}}. \label{chap3.eq:double exponential decay}
\end{align}
The proof of \eqref{chap3.eq:left tail} and \eqref{chap3.eq:double exponential decay} is by establishing an initial left tail estimate, and then using the branching property to bootstrap the estimate. 

% Before we end this section, we mention that this technique also appeared in the study of discrete two-dimensional Gaussian free field (GFF), where there exists a natural way to compare the GFF with a quaternary branching random walk. Ding showed in \cite{dingExponentialDoubleExponential2013} that maximum the two-dimensional discrete Gaussian free field has a double exponential left tail, and the proof for the lower bound also involved a bootstrap argument in the same spirit as ours.
%, where the bootstrap was done by initial estimate in small boxes plus the Fortuin--Kasteleyn--Ginibre (FKG) inequality.

\paragraph{Outline.} The rest of the paper is organized as follows. We provide in Section~\ref{chap3.sec:init ineq} an initial estimate of the left tail of the partition function. Next, in Section~\ref{chap3.sec:bootstrap}, we improve the estimate obtained in the previous section using a bootstrap argument. Finally, Theorem~\ref{chap3.thm:main} is proven in Section~\ref{chap3.sec:proof of main}.

\section{Initial estimate for the left tail of the partition function} \label{chap3.sec:init ineq}

The main goal of this section is to estimate the left tail of $Z_{\beta,N}$. The main idea is to adapt \eqref{chap3.eq:smoothing} to the inhomogeneous setting via a recursive argument of $Z_{\beta,N}$ which we will explain immediately with the following notation.

We denote by $\llbracket m_1,m_2\rrbracket = \{m_1,m_1+1,\ldots,m_2-1,m_2\}$ for two integers $m-1<m_2$. Fix $k\in\llbracket 0,N\rrbracket$. Define $(X^{(k)}_u)_{\abs{u}=N-k}$ to be the centered Gaussian process with covariance function
\begin{align*}
    \EX{X^{(k)}_uX^{(k)}_w} = N\left(A\left(\frac{\abs{v\wedge w}+k}{N}\right)-A\left(\frac{k}{N}\right)\right).
\end{align*}
We also introduce the corresponding partition function.
\begin{align*}
    Z^{(k)}_{\beta,N-k} = \sum_{\abs{u}=N-k} e^{\beta X^{(k)}_u}.
\end{align*}
By the covariance function of $(X^{(k)}_u)_{\abs{u}=N-k}$ and the definition of $Z^{(k)}_{\beta,N-k}$, we obtain
\begin{align}
    \EX{Z^{(k)}_{\beta,N-k}} 
    = 2^{N-k}\exp(\frac{\beta^2 N}{2}\left(1-A\left(\frac{k}{N}\right)\right)). \label{eq:expectation of part}
\end{align}

With the notation above, $Z_{\beta,N}$ admits the following one-step decomposition
\begin{align*}
    Z_{\beta,N} = \sum_{\abs{u}=1} e^{\beta X_u} Z^{u}_{\beta,N-1}
\end{align*}
where $(Z^{u}_{\beta,N})_{\abs{u}}$ are independent and has the same distribution as $Z^{(1)}_{\beta,N-1}$. Then, the task of controlling the left tail of $Z_{\beta,N}$ can be reduced to controlling $Z^{(1)}_{\beta,N-1}$ and $e^{X_u}$, for $\abs{u}=1$. Then, we decompose $Z^{(1)}_{\beta,N-1}$ in the same manner. We call this procedure the bootstrap argument, which will be made precise in Section~\ref{chap3.sec:bootstrap}. The point is that for the bootstrap argument to work, it requires an initial bound, independent of $N$ large and $k$ small, for the left tail of the partition function $Z^{(k)}_{\beta,N-k}$ in the high-temperature regime. This bound is provided in the following.
\begin{proposition}
\label{chap3.prop:init}
Let $\beta<\beta_c$. Let $K=\floor{2\log_\gamma N}$, where $\gamma\in (11/10,2)$. Then, there exist $N_0=N_0(A,\beta)\in\N$ and a constant $\eta_0 = \eta_0(A,\beta) < 1$ such that for all $N\geq N_0$ and $k\in \llbracket 0,K\rrbracket$,
\begin{align*}
\PR{Z^{(k)}_{\beta,N-k} \leq \frac{1}{2}\cdot \EX{Z^{(k)}_{\beta,N-k}}} \leq \eta_0,
\end{align*}
where the expectation of $Z^{(k)}_{\beta,N-k}$ is given in \eqref{eq:expectation of part}.
\end{proposition}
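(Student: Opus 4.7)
The natural approach is the Paley-Zygmund inequality: if one can exhibit a constant $C=C(A,\beta)$ such that
\[\EX{(Z^{(k)}_{\beta,N-k})^2}\leq C\cdot \EX{Z^{(k)}_{\beta,N-k}}^2\]
uniformly in $N$ and $k\in\llbracket 0,K\rrbracket$, then
\[\PR{Z^{(k)}_{\beta,N-k}\geq \tfrac{1}{2}\EX{Z^{(k)}_{\beta,N-k}}}\geq \tfrac{1}{4C},\]
and the proposition follows with $\eta_0=1-1/(4C)$. Direct expansion yields
\[\frac{\EX{(Z^{(k)}_{\beta,N-k})^2}}{\EX{Z^{(k)}_{\beta,N-k}}^2} = \sum_{j=0}^{N-k-1}2^{-j-1}e^{\beta^2 N[A((j+k)/N)-A(k/N)]} + 2^{-(N-k)}e^{\beta^2 N(1-A(k/N))}.\]
By Assumption~\ref{chap3.assumption}(i)--(ii) together with concavity of $\hat{A}$, the $j$-th term behaves for $j/N$ small like $(e^{\beta^2\hat{A}'(0)}/2)^j$, which is summable only when $\beta^2\hat{A}'(0)<\log 2$, i.e.\ only in the restricted range $\beta<\beta_c/\sqrt{2}$. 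So the bare second-moment method does not reach the full high-temperature regime.

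To cover all $\beta<\beta_c$, I would instead apply Paley-Zygmund to a truncated partition function of the form
\[\tilde{Z}^{(k)}_{\beta,N-k} = \sum_{\abs{u}=N-k} e^{\beta X^{(k)}_u}\,\Ind\{X^{(k)}_w\leq \beta\hat{A}'(0)(\abs{w}+k)+L \text{ for every } w\leq u\},\]
with $L=L(A,\beta)$ a large constant (the barrier profile may need a lower-order additive correction to absorb the H\"older remainder from Assumption~\ref{chap3.assumption}(ii)). Two ingredients are then required. (a) \emph{First-moment preservation}: $\EX{\tilde{Z}^{(k)}_{\beta,N-k}}\geq (1-\varepsilon(L))\EX{Z^{(k)}_{\beta,N-k}}$ with $\varepsilon(L)\to 0$ as $L\to\infty$; after the Gaussian tilt $e^{\beta X^{(k)}_u}/\EX{e^{\beta X^{(k)}_u}}$, the barrier event becomes a standard one-sided ballot estimate for a centered Gaussian random walk along the tree. (b) \emph{Uniformly bounded truncated second moment}: $\EX{(\tilde{Z}^{(k)}_{\beta,N-k})^2}\leq C(A,\beta)\EX{\tilde{Z}^{(k)}_{\beta,N-k}}^2$; the truncation removes exactly those pairs $(u,v)$ whose common ancestor has the large Gaussian value driving the divergence $(e^{\beta^2\hat{A}'(0)}/2)^j$ seen above, so the restricted second-moment sum is dominated by a convergent geometric series. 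Choosing $L$ so that $\varepsilon(L)<1/4$ and using $Z^{(k)}_{\beta,N-k}\geq \tilde{Z}^{(k)}_{\beta,N-k}$, Paley-Zygmund applied to $\tilde{Z}^{(k)}_{\beta,N-k}$ gives the stated bound with some $\eta_0<1$ depending only on $A$ and $\beta$.

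The principal obstacle is uniformity in $k\in\llbracket 0,K\rrbracket$, because $K=\floor{2\log_\gamma N}$ diverges with $N$. Since nonetheless $K/N\to 0$, the shifted covariance $A(\cdot+k/N)-A(k/N)$ is linearizable at the origin with error quantitatively controlled by the H\"older exponent in Assumption~\ref{chap3.assumption}(ii), while Assumption~\ref{chap3.assumption}(iii) guarantees the barrier slope $\beta\hat{A}'(0)$ is finite. With these in place, both the ballot estimate in (a) and the barrier-restricted second-moment sum in (b) reduce to bounds depending only on $\hat{A}'(0)$, $\beta$ and $\alpha$ and independent of $N$ and $k\leq K$, which is precisely the regularity anticipated by the remark following Assumption~\ref{chap3.assumption}.
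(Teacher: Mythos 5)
Your overall plan coincides with the paper's: note that the bare Paley--Zygmund argument only covers $\beta<\beta_c/\sqrt{2}$, truncate the partition function along a linear-in-depth barrier, show the truncation essentially preserves the first moment while making the second moment comparable to $\EX{Z^{(k)}_{\beta,N-k}}^2$, and conclude by Paley--Zygmund together with $Z^{(k)}_{\beta,N-k}\geq \tilde{Z}^{(k)}_{\beta,N-k}$; the uniformity in $k\leq K$ is handled, as you suggest, by the local H\"older linearization (Lemma~\ref{chap3.lem:near zero}) and the global bound $A(x)\leq \hat{A}'(0)x$. However, your ingredient (a) fails as stated, and the failure is exactly at the point that makes the full regime $\beta<\beta_c$ delicate. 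Your barrier $\beta\hat{A}'(0)(\abs{w}+k)+L$ lies, after the exponential tilt by $e^{\beta X^{(k)}_u}$, at the mean of the size-biased walk plus the nonnegative gap $\beta\left[\hat{A}'(0)(n+k)-N\left(A\left(\frac{n+k}{N}\right)-A\left(\frac{k}{N}\right)\right)\right]$ plus $L$; this gap vanishes identically when $A$ is linear near $0$ (e.g.\ $A(x)=x$, which Assumption~\ref{chap3.assumption} allows). In that case the tilted barrier event is the event that a centered Gaussian walk stays below the constant $L$ for $N-k$ steps, and the one-sided ballot estimate you invoke gives probability of order $L N^{-1/2}\rightarrow 0$, not $1-\varepsilon(L)$. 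Then $\EX{\tilde{Z}^{(k)}_{\beta,N-k}}\asymp N^{-1/2}\EX{Z^{(k)}_{\beta,N-k}}$, and no Paley--Zygmund bound at the threshold $\frac{1}{2}\EX{Z^{(k)}_{\beta,N-k}}$ can follow: indeed $\PR{\tilde{Z}^{(k)}_{\beta,N-k}>\frac{1}{2}\EX{Z^{(k)}_{\beta,N-k}}}\leq 2\,\EX{\tilde{Z}^{(k)}_{\beta,N-k}}/\EX{Z^{(k)}_{\beta,N-k}}\rightarrow 0$. The ``lower-order additive correction'' you mention only addresses the H\"older error between $N(A(\cdot)-A(\cdot))$ and $\hat{A}'(0)n$, not this problem.

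The missing idea is that the barrier must exceed the tilted drift by a strictly positive \emph{linear} amount, and that this extra slope must be small enough not to ruin the second moment. The paper's truncation uses the barrier $\beta N\left(A\left(\frac{n+k}{N}\right)-A\left(\frac{k}{N}\right)\right)+an+b$ with $a>0$: the first moment is then preserved (up to the factor $\frac{7}{10}$) by a plain union bound plus Chernoff, since $\sum_{n}\exp\left(-(an+b)^2/(2\hat{A}'(0)n)\right)$ is small for $b$ large, uniformly in $N$ and $k\leq K$ (Lemma~\ref{chap3.lem:first moment}) --- no ballot-type estimate is needed. The price appears in the second moment, where the geometric ratio at overlap depth $\ell$ becomes $\tfrac{1}{2}\exp\left(\tfrac{1}{2}\beta^2\hat{A}'(0)+\beta a\right)$, so one must take $a<\frac{\log 2}{\beta}-\frac{1}{2}\beta\hat{A}'(0)$; this window is nonempty precisely because $\beta<\beta_c$ (Lemma~\ref{chap3.lem:second moment}). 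With your constant-offset barrier the second-moment sum is indeed a convergent geometric series, but the first moment is lost; with too large a slope the second moment diverges. Incorporating this choice of $a$ (and then $b$ large, as in the proof of Proposition~\ref{chap3.prop:init}) is what your argument needs to be repaired.
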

To prove Proposition~\ref{chap3.prop:init}, one may want to apply the Paley--Zygmund type argument directly to $Z^{(k)}_{\beta,N-k}$. This, however, will not provide the desired result for the whole high temperature regime due to the correlation of the model. Happily, this can be resolved by introducing a good truncated partition function which is defined as follows. 
For all $\abs{u}=N$, $k\in\llbracket 0,N\rrbracket$, $a>0$ and $b>0$, define the truncating set
\[G_{u,k} \coloneqq 
\Bigg\{\forall n\in \llbracket 1,N-k \rrbracket
\,\Bigg|
\begin{array}{l}
     X_w^{(k)}\leq \beta\cdot N\cdot \left(A\left(\frac{n+k}{N}\right)-A\left(\frac{k}{N}\right)\right) + a\cdot n + b   \\
     \ \text{where $w$ is an ancestor of $u$ with $\abs{w}=n$}
\end{array}
\Bigg\}.\]
Define the truncated partition function
\begin{align}
Z^{(k),\leq}_{\beta,N-k} \coloneqq \sum_{\abs{u}=N-k} e^{\beta X^{(k)}_u} \Ind_{G_{u,k}}. \label{chap3.eq:def of truncated partition function}
\end{align}
In Section~\ref{chap3.sec:moment estimates}, we will prove that the truncated partition has matching squared first moment and second moment. Once we have these two estimates, by passing to a Paley--Zygmund type argument, we can estimate the left tail of the untruncated partition function. 

\subsection{Many-to-one and Many-to-two} \label{sec:many-to-one/two}

To perform the first and second moment estimates, we state and prove the following many-to-one/two lemmas.
\begin{lemma}[Many-to-one]
    \label{lem:many-to-one}
    Define $(X^{(k)}_n)_{n\in \llbracket 0,N-k\rrbracket}$ to be a non-homogeneous random walk such that for all $n\in \llbracket 0,N-k-1\rrbracket$, the increment $X^{(k)}_{n+1}-X^{(k)}_n$ is distributed as a centered Gaussian random variable with variance $N(A((n+1)/N)-A(n/N))$.
    Let 
    \begin{align*}
        G_{k} = \left\{\forall n\in \llbracket 1,N-k \rrbracket : X_n^{(k)}\leq \beta\cdot N\cdot \left(A\left(\frac{n+k}{N}\right)-A\left(\frac{k}{N}\right)\right) + a\cdot n + b \right\}
    \end{align*}
    Then, we have 
    \begin{align*}
        \EX{Z^{(k),\leq}_{\beta,N-k}}
        = 2^{N-k} \EX{e^{\beta X_{N-k}^{(k)}} \Ind_{G_{k}}}.
    \end{align*}
\end{lemma}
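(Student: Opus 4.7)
The plan is a direct computation exploiting linearity of expectation together with the identification of the marginal law of the Gaussian field along a root-to-leaf path. First I will apply linearity to write
\[
\EX{Z^{(k),\leq}_{\beta,N-k}} = \sum_{\abs{u}=N-k} \EX{e^{\beta X^{(k)}_u} \Ind_{G_{u,k}}}.
\]
For a fixed leaf $u$ at depth $N-k$, let $u_n$ denote the ancestor of $u$ at depth $n$ for $n\in\llbracket 0,N-k\rrbracket$, and consider the joint law of the Gaussian process $(X^{(k)}_{u_n})_{n}$ along the path from root to $u$.

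Since $u_n\wedge u_m = u_{\min(n,m)}$, the covariance formula for $(X^{(k)}_u)$ gives
\[
\EX{X^{(k)}_{u_n} X^{(k)}_{u_m}} = N\left(A\left(\tfrac{\min(n,m)+k}{N}\right) - A\left(\tfrac{k}{N}\right)\right),
\]
so the path is a centered Gaussian process with independent increments whose variances match those of the non-homogeneous random walk $(X^{(k)}_n)_n$ defined in the lemma. In particular, $(X^{(k)}_{u_n})_n$ and $(X^{(k)}_n)_n$ have the same law.

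Once the path law is identified, the event $G_{u,k}$ — which depends only on the values $(X^{(k)}_{u_n})_n$ along the path — has, jointly with $X^{(k)}_u = X^{(k)}_{u_{N-k}}$, the same distribution as $G_k$ has with $X^{(k)}_{N-k}$. Hence
\[
\EX{e^{\beta X^{(k)}_u} \Ind_{G_{u,k}}} = \EX{e^{\beta X^{(k)}_{N-k}} \Ind_{G_k}},
\]
and this value does not depend on $u$. Summing over the $2^{N-k}$ leaves at depth $N-k$ of the binary tree $\T_N$ yields the lemma.

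There is no genuine obstacle here: the argument reduces to a covariance check showing that the restriction of the tree-indexed Gaussian field to any single root-to-leaf path is the prescribed random walk, and to noting that the truncation event $G_{u,k}$ is measurable with respect to that single path. The only routine care needed is to match the Gaussian marginals correctly so that $\EX{e^{\beta X^{(k)}_u}\Ind_{G_{u,k}}}$ really transfers identically to the random-walk quantity $\EX{e^{\beta X^{(k)}_{N-k}}\Ind_{G_k}}$.
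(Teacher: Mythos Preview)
Your proof is correct and follows essentially the same approach as the paper: apply linearity of expectation, observe that the restriction of the tree-indexed Gaussian field to any root-to-leaf path has the law of the random walk $(X^{(k)}_n)_n$, and then sum the $2^{N-k}$ identical terms. Your write-up is in fact slightly more explicit than the paper's, since you spell out the covariance computation $\EX{X^{(k)}_{u_n} X^{(k)}_{u_m}} = N\bigl(A(\tfrac{\min(n,m)+k}{N}) - A(\tfrac{k}{N})\bigr)$ that justifies the identification of laws, whereas the paper simply asserts it.
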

\begin{proof}
By the linearity of expectation, we have 
\begin{align*}
    \EX{Z^{(k),\leq}_{\beta,N-k}} = \sum_{\abs{u}=N-k} \EX{e^{\beta X_{u}^{(k)}} \Ind_{G_{u,k}}}.
\end{align*}
On the other hand, for each $u$ with $\abs{u}=N-k$, the path $(X_{w}^{(k)})_{w\leq u}$ has the same distribution as $(X^{(k)}_n)_{n\in \llbracket 0,N-k\rrbracket}$, and the proof follows.
\end{proof}

\begin{lemma}[Many-to-two]
    \label{lem:many-to-two}
    Define $(X^{(k)}_{\ell,n})_{n\in \llbracket 0,N-k\rrbracket}$ and $(\tilde{X}^{(k)}_{\ell,n})_{n\in \llbracket 0,N-k\rrbracket}$ to be non-homogeneous random walks such that 
    \begin{itemize}
        \item for all $n\in \llbracket 0,N-k-1\rrbracket$, the increments $X^{(k)}_{\ell,n+1}-X^{(k)}_{\ell,n}$ and $\tilde{X}^{(k)}_{\ell,n+1}-\tilde{X}^{(k)}_{\ell,n}$ are both distributed as a centered Gaussian random variable with variance $N(A((n+1)/N)-A(n/N))$,
        \item $X^{(k)}_{\ell,n}=\tilde{X}^{(k)}_{\ell,n}$ for all $n\in\llbracket 0,\ell\rrbracket$ and 
        \item $(X^{(k)}_{\ell,\ell+m}-X^{(k)}_{\ell,\ell})_{m\in\llbracket 0,N-k-\ell\rrbracket}$ and $(\tilde{X}^{(k)}_{\ell,\ell+m}-\tilde{X}^{(k)}_{\ell,\ell})_{m\in\llbracket 0,N-k-\ell\rrbracket}$ are independent.
    \end{itemize}
    Let 
    \begin{align*}
        G_{\ell,k} &= \left\{\forall n\in \llbracket 1,N-k \rrbracket : X_{\ell,n}^{(k)}\leq \beta\cdot N\cdot \left(A\left(\frac{n+k}{N}\right)-A\left(\frac{k}{N}\right)\right) + a\cdot n + b \right\}, \\
        \tilde{G}_{\ell,k} &= \left\{\forall n\in \llbracket 1,N-k \rrbracket : \tilde{X}_{\ell,n}^{(k)}\leq \beta\cdot N\cdot \left(A\left(\frac{n+k}{N}\right)-A\left(\frac{k}{N}\right)\right) + a\cdot n + b \right\}.
    \end{align*}
    Then, we have
    \begin{align*}
        &\EX{(Z^{(k),\leq}_{\beta,N-k})^2} \nonumber \\
        &= 2^{N-k} \EX{e^{\beta X_{N-k}^{(k)}} \Ind_{G_{k}}} 
        + \sum_{\ell=0}^{N-k-1} 2^{2(N-k)-\ell-1} \EX{e^{\beta (X^{(k)}_{\ell,N-k}+\tilde{X}_{\ell,N-k}^{(k)})} \Ind_{G_{\ell,k}\cap \tilde{G}_{\ell,k}}}.
    \end{align*}
\end{lemma}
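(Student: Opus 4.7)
The plan is to carry out the standard many-to-two decomposition, grouping ordered pairs of leaves by the depth of their most recent common ancestor. I first expand the square as a double sum
\begin{align*}
\EX{(Z^{(k),\leq}_{\beta,N-k})^2} = \sum_{\abs{u}=N-k}\sum_{\abs{v}=N-k} \EX{e^{\beta(X_u^{(k)} + X_v^{(k)})} \Ind_{G_{u,k} \cap G_{v,k}}},
\end{align*}
and partition it according to $\ell = \abs{u \wedge v} \in \llbracket 0, N-k \rrbracket$. For each $\ell < N-k$, a direct count yields $2^{N-k} \cdot 2^{N-k-\ell-1} = 2^{2(N-k)-\ell-1}$ ordered pairs $(u,v)$ with $\abs{u \wedge v} = \ell$: fix $u$ in one of $2^{N-k}$ ways, then $v$ must lie in the subtree hanging off the sibling of $u$'s ancestor at depth $\ell+1$, which contains $2^{N-k-\ell-1}$ leaves. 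The diagonal $\ell = N-k$ (i.e., $u = v$) contributes $2^{N-k}$ pairs and will be handled through Lemma~\ref{lem:many-to-one}.

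The second step is to identify, for any pair $(u,v)$ with $\abs{u \wedge v} = \ell$, the joint distribution of $\bigl((X^{(k)}_w)_{w \leq u}, (X^{(k)}_w)_{w \leq v}\bigr)$ and to observe that this joint law depends only on $\ell$. Both processes are centered Gaussian, so I just need to match covariances using the defining identity $\EX{X^{(k)}_x X^{(k)}_y} = N\bigl(A((\abs{x \wedge y}+k)/N) - A(k/N)\bigr)$. For prefixes at depths bounded by $\ell$, both sit above the common ancestor, which forces the two paths to coincide on $\llbracket 0, \ell \rrbracket$. For prefixes at depths exceeding $\ell$ with $w \leq u$ and $w' \leq v$ lying in the two different subtrees below the common ancestor, the quantity $\abs{w \wedge w'}$ is pinned at $\ell$, so the increments after level $\ell$ have vanishing cross-covariance and are therefore independent. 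This is exactly the coupling $(X^{(k)}_{\ell,\cdot}, \tilde{X}^{(k)}_{\ell,\cdot})$ introduced in the statement, and under this identification the events $G_{u,k}$ and $G_{v,k}$ translate into $G_{\ell,k}$ and $\tilde{G}_{\ell,k}$, respectively.

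Combining the counting with the joint-law identification produces the off-diagonal sum
\[
\sum_{\ell=0}^{N-k-1} 2^{2(N-k)-\ell-1}\, \EX{e^{\beta(X^{(k)}_{\ell,N-k} + \tilde{X}^{(k)}_{\ell,N-k})} \Ind_{G_{\ell,k} \cap \tilde{G}_{\ell,k}}},
\]
while the diagonal reduces via Lemma~\ref{lem:many-to-one} (applied to $e^{2\beta X^{(k)}_{N-k}}\Ind_{G_k}$) to the first term on the right-hand side of the claimed identity. The only substantive obstacle is the Gaussian covariance verification for the two-path coupling; once the block structure (common segment up to level $\ell$, independent evolutions afterwards) is carefully checked against the CREM covariance formula, the rest is just bookkeeping.
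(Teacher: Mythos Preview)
Your proposal is correct and follows essentially the same route as the paper: expand the square, stratify the double sum by $\ell=\abs{u\wedge v}$, count the ordered pairs at each level, and identify the joint law of the two ancestral paths with the coupling $(X^{(k)}_{\ell,\cdot},\tilde{X}^{(k)}_{\ell,\cdot})$ via the CREM covariance. Your covariance-matching justification is slightly more explicit than the paper's, but the argument is the same; note that the diagonal term should indeed carry $e^{2\beta X^{(k)}_{N-k}}$ as you write (the single $\beta$ in the displayed statement is a typo, corrected when the lemma is invoked later in the paper).
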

\begin{proof}
    By the linearity of expectation, we have the decomposition
    \begin{align}
        &\EX{(Z^{(k),\leq}_{\beta,N-k})^2} \nonumber \\
        &=
        \sum_{\abs{u}=\abs{w}=N-k} \EX{e^{\beta (X^{(k)}_{u}+X_{w}^{(k)})} \Ind_{G_{u}\cap \tilde{G}_{w}}} \nonumber \\
        &=
        \sum_{\ell=0}^{N-k}
        \sum_{\substack{\abs{u}=\abs{w}=N-k, \\ \abs{u\wedge w}=\ell}} \EX{e^{\beta (X^{(k)}_{u}+X_{w}^{(k)})} \Ind_{G_{u}\cap G_{w}}}. \label{eq:expansion}
    \end{align}
    For all $\ell\in\llbracket 0,N-k-1\rrbracket$, we have 
    \begin{align*}
        (X^{(k)}_{u_1},X^{(k)}_{u_2})_{u_1\leq u,w\leq u_2}
        \stackrel{(d)}{=}
        (X^{(k)}_{\ell,n_1},\tilde{X}^{(k)}_{\ell,n_2})_{n_1,n_2\in \llbracket 0,N-k\rrbracket},
    \end{align*}
    and for $\ell= N-k$, 
    \begin{align*}
    (X^{(k)}_{u_1})_{u_1\leq w}= (X^{(k)}_{u_2})_{u_2\leq w} \stackrel{(d)}{=}  (X^{(k)}_n)_{n\in \llbracket 0,N-k\rrbracket},
    \end{align*}
    where $(X^{(k)}_n)_{n\in \llbracket 0,N-k\rrbracket}$ is introduced in Lemma~\ref{lem:many-to-one}. On the other hand, 
    \begin{align*}
        &\text{the number of pairs $(u,w)$ with $\abs{u}=\abs{w}=N-k$ and $\abs{u\wedge w}=\ell$} \nonumber \\
        &=
        \begin{cases}
            2^{2(N-k)-\ell-1}, & \ell\in\llbracket 0,N-k-1\rrbracket, \\
            2^{N-k}, & \ell=N-k.
        \end{cases}
    \end{align*}
    Therefore, \eqref{eq:expansion} yields
    \begin{align}
        &\EX{(Z^{(k),\leq}_{\beta,N-k})^2} \nonumber \\
        &= 2^{N-k} \EX{e^{\beta X_{N-k}^{(k)}} \Ind_{G_{k}}} 
        + \sum_{\ell=0}^{N-k-1} 2^{2(N-k)-\ell-1} \EX{e^{\beta (X^{(k)}_{\ell,N-k}+\tilde{X}_{\ell,N-k}^{(k)})} \Ind_{G_{\ell,k}\cap \tilde{G}_{\ell,k}}}, \nonumber
    \end{align}
    and the proof is completed.
\end{proof}

\subsection{Two useful lemmas for the moment estimates} 
\label{chap3.sec:useful}

The following lemma estimates the difference between $A(y)$ and $A(z)$ when $y$ and $z$ are close to $0$.

\begin{lemma}
\label{chap3.lem:near zero}
For all $y,z\in [0,x_1]$, where $x_1$ is given in (ii) of Assumption~\ref{chap3.assumption}, there exists a constant $C=C(A,x_1)>0$ such that,
\begin{align}
    \abs{A\left(y\right) - A\left( z\right)} \leq \hat{A}'(0) \abs{y-z} + Cy^\alpha\abs{y-z} + C\abs{y-z}^{1+\alpha}, \label{chap3.eq:fine bound}
\end{align}
where $\alpha$ appeared in (ii) of Assumption~\ref{chap3.assumption}.
\end{lemma}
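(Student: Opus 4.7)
My plan is to combine the fundamental theorem of calculus, the local H\"older continuity of $A'$ applied at the two reference points $z$ and $0$, and the concave-hull comparison $A'(0)\le\hat{A}'(0)$.

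By symmetry of the claim under swapping $y$ and $z$, I may assume without loss of generality that $z\le y$. After replacing $x_1$ by $\min(x_0,x_1)$ if necessary, I may also assume that $A$ is differentiable on $(0,x_1)$; the H\"older assumption~(ii) then forces $A'$ to extend continuously to $[0,x_1]$, with its value at $0$ being the right derivative supplied by~(i). The fundamental theorem of calculus yields
\[
A(y)-A(z) \;=\; \int_z^y A'(t)\,dt,
\]
and I decompose the integrand around $z$ as $A'(t)=A'(z)+[A'(t)-A'(z)]$.

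For the fluctuation term, letting $C_1=C_1(A,x_1)$ denote the H\"older constant from assumption~(ii), I bound $|A'(t)-A'(z)|\le C_1(t-z)^{\alpha}\le C_1(y-z)^{\alpha}$, so this part contributes at most $C_1(y-z)^{1+\alpha}$ to the integral. For the constant piece $A'(z)$, I apply H\"older continuity now centered at $0$ to obtain $A'(z)\le A'(0)+C_1 z^{\alpha}\le A'(0)+C_1 y^{\alpha}$. To replace $A'(0)$ by $\hat{A}'(0)$, I invoke the concave-hull inequality $\hat{A}(t)\ge A(t)$ for all $t\in[0,1]$, together with $\hat{A}(0)=A(0)=0$: dividing by $t>0$ and letting $t\to 0^+$ yields $A'(0)\le\hat{A}'(0)$. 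Assembling the three ingredients produces the target bound with $C=C_1$.

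The proof is essentially bookkeeping once the correct decomposition around $z$ is chosen, so there is no genuine obstacle. The only conceptual point worth emphasizing is the comparison $A'(0)\le\hat{A}'(0)$, which is what ensures that the leading-order coefficient on the right-hand side is $\hat{A}'(0)$, in line with the remark following Assumption~\ref{chap3.assumption}; this is ultimately the reason the subsequent moment estimates will depend only on $\hat{A}'(0)$ rather than on any finer local slope of $A$ itself.
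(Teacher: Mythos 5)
Your argument is essentially the paper's proof written in integral form: the paper expands $A$ to first order at $y$ with a Lagrange remainder (i.e.\ the mean value theorem) and controls both $A'(y)-A'(0)$ and $A'(\xi)-A'(y)$ by the H\"older hypothesis, which matches your decomposition $A'(t)=A'(z)+[A'(t)-A'(z)]$ under the integral together with $A'(z)\le A'(0)+C_1z^\alpha$; your explicit derivation of $A'(0)\le\hat{A}'(0)$ from $\hat{A}\ge A$ and $\hat{A}(0)=A(0)=0$ is a welcome detail that the paper leaves implicit. One repair is needed, though: the bound \eqref{chap3.eq:fine bound} is not symmetric in $y$ and $z$, since the middle term is $Cy^\alpha\abs{y-z}$ rather than $C\max(y,z)^\alpha\abs{y-z}$, so you cannot reduce to $z\le y$ ``by symmetry''; in the case $y<z$ your reduction only yields the estimate with $z^\alpha$ in place of $y^\alpha$, which does not imply the stated one. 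The fix is immediate and costs nothing: either always expand around $y$ (as the paper does, so the constant piece is $A'(y)\le A'(0)+C_1y^\alpha$ regardless of the ordering of $y$ and $z$), or, when $y<z$, use the subadditivity bound $z^\alpha\le y^\alpha+(z-y)^\alpha$ to absorb the excess into the $C\abs{y-z}^{1+\alpha}$ term at the price of a larger constant. With that adjustment your proof is correct and coincides in substance with the paper's.
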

\begin{proof}
Fix $y,z\in [0,x_1]$, where $x_1$ is given in (ii) of Assumption \ref{chap3.assumption}. Then by applying the $1$st order Taylor expansion of $A$ at $y$ with Lagrange remainder, we have 
\begin{align*}
A(z) = A(y) + A'(y)(z-y) + (z-y)(A'(\xi)-A'(y)),
\end{align*}
where $\xi$ is between $z$ and $y$. Then, the triangle inequality and the fact that $A$ is non-decreasing yield
\begin{align}
\abs{A(z)-A(y)}\leq A'(y)\abs{z-y} + \abs{z-y}\abs{A'(\xi)-A'(y)}. \label{chap3.eq:Taylor}
\end{align}
By the local H\"{o}lder continuity of $A$ in a neighborhood around $0$ provided by Assumption~\ref{chap3.assumption}, there exists $C=C(A,x_1)>0$ such that
\begin{align}
A'(y)\leq A'(0) + Cy^\alpha. \label{chap3.eq:Taylor 1st term}
\end{align}
Similarly, by the local H\"{o}lder continuity of $A$ in a neighborhood around $0$ provided by Assumption~\ref{chap3.assumption}, we have
\begin{align}
\abs{A'(\xi)-A'(y)}\leq C\abs{z-y}^\alpha, \label{chap3.eq:Taylor 2nd term}
\end{align}
where $C$ is the same constant as in \eqref{chap3.eq:Taylor 1st term}. Combining \eqref{chap3.eq:Taylor}, \eqref{chap3.eq:Taylor 1st term} and \eqref{chap3.eq:Taylor 2nd term}, we conclude that
\begin{align*}
\abs{A(z)-A(y)}\leq \hat{A}'(0)\abs{z-y} + Cy^\alpha\abs{z-y} + C\abs{z-y}^{1+\alpha},
\end{align*}
and this completes the proof.
\end{proof}

We state and prove the following exponential tilting formula that we apply in the paper.

\begin{lemma}[Exponential tilting]
    \label{lem:exponential tilting}
    Fix $m\in\N$. Let $(Y_n)_{n\in\llbracket 1,m\rrbracket}$ be a collection of independent centered Gaussian with variance $\sigma_n^2 > 0$. For all $n\in\llbracket 1,m\rrbracket$, define $S_n = Y_1+\cdots +Y_n$. Let $g:\llbracket 1,m\rrbracket\rightarrow \R$ be a function and 
    \begin{align*}
        G_n = \{\forall \ell\in\llbracket 1,n\rrbracket : S_\ell \leq g(\ell)\}, \quad n\in\llbracket 1,m\rrbracket.
    \end{align*}
    Then for all $\beta\in\R$,
    \begin{align*}
        \EX{e^{\beta S_n} \Ind_{G_n}}
        = \EX{e^{\beta S_n}}
        \PR{\forall \ell\in\llbracket 1,n\rrbracket : S_\ell \leq g(\ell)-\beta \sum_{r=1}^\ell \sigma_r^2}.
    \end{align*}
\end{lemma}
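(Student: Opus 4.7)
The plan is to prove the identity by direct computation of the Gaussian density, using the change of variables corresponding to the classical Cameron--Martin shift. Since the $(Y_n)_{n=1}^m$ are independent centered Gaussians, the joint density of $(Y_1,\ldots,Y_n)$ is $\prod_{i=1}^n (2\pi\sigma_i^2)^{-1/2}\exp(-y_i^2/(2\sigma_i^2))$, and both sides of the claimed identity are explicit multiple integrals over $\mathbb{R}^n$.

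First, I would write
\[
\EX{e^{\beta S_n}\Ind_{G_n}}
= \int_{\R^n} e^{\beta (y_1+\cdots+y_n)} \prod_{\ell=1}^n \Ind_{\{y_1+\cdots+y_\ell\leq g(\ell)\}} \prod_{i=1}^n \frac{1}{\sqrt{2\pi\sigma_i^2}} e^{-y_i^2/(2\sigma_i^2)}\,\dd y_i,
\]
and then complete the square in each coordinate via the pointwise identity $\beta y_i - y_i^2/(2\sigma_i^2) = -(y_i-\beta\sigma_i^2)^2/(2\sigma_i^2) + \beta^2\sigma_i^2/2$. This factors the integrand into the constant $\exp\!\bigl(\tfrac{\beta^2}{2}\sum_{i=1}^n\sigma_i^2\bigr)$ times the joint density of independent Gaussians with means $\beta\sigma_i^2$ and variances $\sigma_i^2$, times the same indicator.

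Next, I would perform the translation $z_i = y_i - \beta\sigma_i^2$, so that the partial sums transform as $z_1+\cdots+z_\ell = y_1+\cdots+y_\ell - \beta\sum_{r=1}^\ell \sigma_r^2$. This rewrites the indicator as $\Ind_{\{z_1+\cdots+z_\ell \leq g(\ell)-\beta\sum_{r=1}^\ell\sigma_r^2\}}$, while the Jacobian is $1$ and the density of $(z_1,\ldots,z_n)$ is exactly that of $(Y_1,\ldots,Y_n)$ under $\mathbb{P}$. Finally, I would recognize the prefactor as $\exp(\beta^2 \VAR{S_n}/2) = \EX{e^{\beta S_n}}$ since $S_n$ is centered Gaussian with variance $\sum_{i=1}^n \sigma_i^2$, and identify the remaining integral with the probability $\PR{\forall \ell\in\llbracket 1,n\rrbracket : S_\ell\leq g(\ell)-\beta\sum_{r=1}^\ell \sigma_r^2}$, yielding the lemma.

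There is no real obstacle here: the proof is a one-line Girsanov/Cameron--Martin computation disguised as an integral manipulation, and the independence plus the Gaussianity are exactly what make the square-completion decouple cleanly. The only point worth being careful about is bookkeeping the sum $\sum_{r=1}^\ell \sigma_r^2$ that appears in every partial-sum indicator simultaneously; this is automatic because the shift $z_i = y_i - \beta\sigma_i^2$ is applied coordinate by coordinate, so the translation in $S_\ell$ is exactly $\beta\sum_{r=1}^\ell \sigma_r^2$ for every $\ell$.
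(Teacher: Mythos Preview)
Your proposal is correct and follows essentially the same approach as the paper: write the expectation as a multiple integral against the product Gaussian density, complete the square coordinate by coordinate to extract the factor $\exp\bigl(\tfrac{\beta^2}{2}\sum_{i=1}^n\sigma_i^2\bigr)=\EX{e^{\beta S_n}}$, and then perform the shift $y_i\mapsto y_i-\beta\sigma_i^2$ to turn the indicator into the shifted event. The paper's proof is identical up to notation (it writes the event as a function $G_n(x_1,\ldots,x_n)$ and records $\EX{e^{\beta S_n}}$ separately first), so there is nothing to add.
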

\begin{proof}
    Fix $n\in\N$. Firstly, from the independence of $\{Y_i\}_{i=1}^n$, we have 
    \begin{align}
        \EX{e^{\beta S_n}}
        = 
        \prod_{i=1}^n \EX{e^{\beta Y_i}}
        = 
        \prod_{i=1}^n \exp(\frac{\beta^2}{2}\sigma_i^2)
        =
        \exp(\frac{\beta}{2}\sum_{i=1}^n \sigma_i^2). \label{eq:product of laplace}
    \end{align}
    Next, for all $x_1,\ldots,x_n\in\R$, we define 
    \begin{align}
        G_n(x_1,\ldots,x_n) 
        &= \{\forall \ell\in\llbracket 1,n\rrbracket : x_1+\cdots+x_\ell \leq g(\ell)\}, \nonumber \\
        \tilde{G}_n(x_1,\ldots,x_n) 
        &= 
        \Big\{\forall \ell\in\llbracket 1,n\rrbracket : x_1+\cdots+x_\ell \leq g(\ell) - \beta \sum_{r=1}^\ell \sigma_r^2
        \Big\}. \nonumber 
    \end{align}
    Then, we have
    \begin{align}
        &\EX{e^{\beta S_n} \Ind_{G_n}}
        \nonumber \\
        &=
        \frac{1}{\prod_{i=1}^n \sqrt{2\pi \sigma_i^2}}
        \int_{\R^n} e^{\beta \sum_{i=1}^n x_i} \Ind_{G_n(x_1,\ldots,x_n)}
        \exp(-\sum_{i=1}^n \frac{x^2_i}{2\sigma_i^2}) \dd{x_1}\cdots\dd{x_n} \nonumber \\
        &=
        \exp(\frac{\beta^2}{2}\sum_{i=1}^n \sigma_i^2) \nonumber \\
        &\quad\quad
        \frac{1}{\prod_{i=1}^n \sqrt{2\pi \sigma_i^2}}
        \int_{\R^n} \Ind_{G_n(x_1,\ldots,x_n)}
        \exp(-\sum_{i=1}^n \frac{(x_i-\beta\sigma_i^2)^2}{2\sigma_i^2}) \dd{x_1}\cdots\dd{x_n} \nonumber \\
        &=
        \exp(\frac{\beta^2}{2}\sum_{i=1}^n \sigma_i^2)
        \frac{1}{\prod_{i=1}^n \sqrt{2\pi \sigma_i^2}}
        \int_{\R^n} \Ind_{\tilde{G}_n(y_1,\ldots,y_n)}
        \exp(-\sum_{i=1}^n \frac{y_i^2}{2\sigma_i^2}) \dd{y_1}\cdots\dd{y_n} \nonumber \\
        &= \EX{e^{\beta S_n}} \PR{\forall \ell\in\llbracket 1,n\rrbracket : S_\ell \leq g(\ell)-\beta \sum_{r=1}^\ell \sigma_r^2}, \nonumber 
    \end{align}
    where the second to last line follows from the change of variables $y_i=x_i - \beta \sigma_i^2$, $i=1,\ldots,n$, and the last line follows from \eqref{eq:product of laplace}.
\end{proof}

\subsection{Moment estimates of the truncated partition function} \label{chap3.sec:moment estimates}

We now proceed with the first moment estimate.

\begin{lemma}[First moment estimate]
\label{chap3.lem:first moment}
For all 
\begin{align*}
a>0 
\quad \text{and} \quad
b\geq \frac{\hat{A}'(0)}{a}\log(10\max\left\{\frac{e^{-a^2/(2\hat{A}'(0))}}{1-e^{-a^2/(2\hat{A}'(0))}},1\right\}), 
\end{align*}
there exist $N_0=N_0(A,\beta,a,b)\in\N$ such that for all $N\geq N_0$ and $k\in\llbracket 0,K\rrbracket$ where $K=\floor{2\log_\gamma N}$ with $\gamma\in(11/10,2)$,
\begin{align*}
\EX{Z_{\beta,N-k}^{(k),\leq}} \geq \frac{7}{10} \EX{Z_{\beta,N-k}^{(k)}}.
\end{align*}
\end{lemma}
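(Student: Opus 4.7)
The plan is to reduce the ratio $\EX{Z^{(k),\leq}_{\beta,N-k}}/\EX{Z^{(k)}_{\beta,N-k}}$ to a Gaussian random walk barrier probability and then bound that probability by a union bound. By Lemma~\ref{lem:many-to-one} together with \eqref{eq:expectation of part} and the identity $\EX{e^{\beta X^{(k)}_{N-k}}} = e^{\beta^2 N(1-A(k/N))/2}$, the ratio rewrites as $\EX{e^{\beta X^{(k)}_{N-k}} \Ind_{G_k}}/\EX{e^{\beta X^{(k)}_{N-k}}}$. Apply Lemma~\ref{lem:exponential tilting} with $g(\ell) = \beta N (A((\ell+k)/N) - A(k/N)) + a\ell + b$: the shift $\beta \sum_{r=1}^\ell \sigma_r^2 = \beta N(A((\ell+k)/N) - A(k/N))$ exactly cancels the Gaussian part of $g(\ell)$, so the ratio equals
\[\PR{\forall \ell \in \llbracket 1, N-k\rrbracket: S_\ell \leq a\ell + b},\]
where $S_\ell$ is the centered Gaussian walk with variance $v_\ell := N(A((\ell+k)/N) - A(k/N))$. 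It therefore suffices to show this probability is at least $7/10$.

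By complement, union bound, and the standard Gaussian tail,
\[\PR{\exists \ell \in \llbracket 1, N-k\rrbracket: S_\ell > a\ell + b} \leq \sum_{\ell=1}^{N-k} e^{-(a\ell+b)^2/(2v_\ell)}.\]
Split the sum at $\ell^* = \floor{N^\delta}$ for some small fixed $\delta \in (0,1)$. For $\ell \leq \ell^*$, apply Lemma~\ref{chap3.lem:near zero} with $y = (\ell+k)/N$ and $z = k/N$; since $(\ell^*+K)/N \to 0$, this gives $v_\ell \leq \hat{A}'(0)\ell(1+o(1))$ uniformly in $\ell \leq \ell^*$ and $k \leq K$. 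Using $(a\ell+b)^2 \geq a^2\ell^2 + 2ab\ell$ and summing the resulting geometric series then yields, as $N\to\infty$, an upper bound of $e^{-ab/\hat{A}'(0)} \cdot e^{-a^2/(2\hat{A}'(0))}/(1 - e^{-a^2/(2\hat{A}'(0))}) \leq 1/10$ by the hypothesis on $b$. For $\ell > \ell^*$, use the global bound $v_\ell \leq \hat{A}'(0)(\ell+k)$ coming from $A \leq \hat{A}$ and the concavity of $\hat{A}$ with $\hat{A}(0) = 0$; combined with $k \leq K = O(\log N) \ll \ell^*$, this gives $v_\ell \leq 2\hat{A}'(0)\ell$, so the tail is bounded by $\sum_{\ell > \ell^*} e^{-a^2\ell/(4\hat{A}'(0))}$, which is exponentially small in $\ell^*$ and hence $o(1)$. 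Adding the two contributions produces a total of at most $3/10$ for $N \geq N_0$ large, giving the claim.

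The main obstacle is the joint choice of $\ell^*$: it must satisfy $(\ell^*+K)/N \to 0$ so that Lemma~\ref{chap3.lem:near zero} produces the sharp constant $\hat{A}'(0)$ needed to match the hypothesis on $b$, while simultaneously $\ell^* \gg K$ so that the cruder tail estimate contributes only $o(1)$. Any $\ell^* = N^\delta$ with $\delta$ a sufficiently small positive constant works. The delicacy is to ensure that the Hölder correction from Assumption~\ref{chap3.assumption}(ii), which quantifies the deviation of $A$ from its linear tangent at $0$, does not spoil the matching leading geometric sum; this is the reason the first moment estimate depends only on $\hat{A}'(0)$ and not on finer features of $A$.
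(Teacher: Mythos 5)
Your proposal is correct and follows essentially the same route as the paper: many-to-one plus the exponential tilting lemma to reduce the ratio to a barrier-crossing probability, then a union/Chernoff bound split into a near-zero regime handled via Lemma~\ref{chap3.lem:near zero} (where the hypothesis on $b$ yields the $1/10$ bound) and a far regime handled via $A(x)\leq \hat{A}'(0)x$. The only difference is cosmetic: you split the sum at $\ell^*=\floor{N^\delta}$ rather than at $K=\floor{2\log_\gamma N}$ as the paper does, and both choices satisfy the needed conditions $(\ell^*+K)/N\to 0$ and $\ell^*\gg K$.
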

\begin{proof}
%\TODO{Choice of $b$??}
By Lemma~\ref{lem:many-to-one}, we have
\begin{align}
\EX{Z^{(k),\leq}_{\beta,N-k}} 
&= 2^{N-k}\EX{e^{\beta X^{(k)}_{N-k}} \Ind_{G_{N-k}}}. \label{eq:many-to-one}
\end{align}
By Lemma~\ref{lem:exponential tilting}, we have 
\begin{align}
&\EX{e^{\beta X^{(k)}_{N-k}} \Ind_{G_{N-k}}} \nonumber \\
&= 
\exp(\frac{\beta^2 N}{2}\left(1-A\left(\frac{k}{N}\right)\right)) \cdot \PR{\forall n\in \llbracket 1,N-k \rrbracket : X^{(k)}_{N-k}(n)\leq a\cdot n + b }. \label{chap3.eq:Z tr m1.1}
\end{align}
We want to give a lower bound of the probability in \eqref{chap3.eq:Z tr m1.1}. The union bound and the Chernoff bound yield 
\begin{align}
\PR{\exists n\in \llbracket 1,N-k \rrbracket : X^{(k)}_{N-k}(n)> a\cdot n + b }
&\leq \sum_{n=1}^{N-k} \PR{X^{(k)}_{N-k}(n)> a\cdot n + b } \nonumber \\
&\leq \sum_{n=1}^{N-k} \exp\left(-\frac{(a\cdot n+b)^2}{2N\cdot (A(\frac{n+k}{N})-A(\frac{k}{N}))}\right). \label{chap3.eq:prob.0.0}
\end{align}
We separate \eqref{chap3.eq:prob.0.0} into two terms
\begin{align}
\eqref{chap3.eq:prob.0.0}
= \sum_{n=1}^{K} \exp\left(-\frac{(a\cdot n+b)^2}{2N\cdot (A(\frac{n+k}{N})-A(\frac{k}{N}))}\right) 
+ \sum_{n=K+1}^{N-k} \exp\left(-\frac{(a\cdot n+b)^2}{2N\cdot (A(\frac{n+k}{N})-A(\frac{k}{N}))}\right).
\label{chap3.eq:prob.0}
\end{align}

We start with bounding the first term of \eqref{chap3.eq:prob.0}. Take $N_1\in\N$ such that $2K/N\leq x_1$ for all $N\geq N_1$. Then by Lemma~\ref{chap3.lem:near zero},
for all $n\in \llbracket 1,K\rrbracket$, we have
\begin{align}
N\cdot \left(A\left(\frac{n+k}{N}\right)-A\left(\frac{k}{N}\right)\right)
&\leq \hat{A}'(0) n + C\frac{n(n+k)^\alpha}{N} + C\frac{n^{1+\alpha}}{N^\alpha} \nonumber \\
&\leq \hat{A}'(0) n + C\frac{K(2K)^\alpha}{N} + C\frac{K^{1+\alpha}}{N^{\alpha}} \label{chap3.eq:diff of A upp.0} \\
&= \hat{A}'(0) n + h(N),
\label{chap3.eq:diff of A upp} \\
&\leq (\hat{A}'(0)+h(N))n. \label{chap3.eq:diff of A upp.01}
\end{align}
where $h(N)$ is defined as
\begin{align}
    h(N)= C(1+2^{\alpha}) \frac{K^{1+\alpha}}{N}, \label{eq:h(N)}
\end{align}
\eqref{chap3.eq:diff of A upp.0} follows by the assumption that $n\leq K$ and $k\leq K$ and \eqref{chap3.eq:diff of A upp.01} follows from the fact that $n\geq 1$ and $h(N)\geq 0$.

By \eqref{chap3.eq:diff of A upp.01} and the fact that $(an+b)^2 \geq a^2n^2 + 2abn$ for all $a,b,n>0$, the first term of \eqref{chap3.eq:prob.0} is bounded from above by
\begin{align}
&\sum_{n=1}^{K} \exp\left(-\frac{(a\cdot n+b)^2}{2N\cdot (A(\frac{n+k}{N})-A(\frac{k}{N}))}\right) \nonumber \\
&\leq \sum_{n=1}^{K} \exp\left(-\frac{a^2 n^2 + 2abn}{2(\hat{A}'(0) + h(N))n}\right) \nonumber \\
&=
\exp(-\frac{ab}{\hat{A}'(0)+h(N)})\sum_{n=1}^{K} \exp(-\frac{a^2n}{2(\hat{A}'(0)+h(N))}).
\label{chap3.eq:diff of A upp.1}
\end{align}
The summation in \eqref{chap3.eq:diff of A upp.1} is bounded from above by
\begin{align}
\sum_{n=1}^{K} \exp(-\frac{a^2n}{2(\hat{A}'(0)+h(N))})
\leq \sum_{n=1}^{\infty} \exp(-\frac{a^2n}{2(\hat{A}'(0)+h(N))})
\leq \frac{\exp(-\frac{a^2}{2(\hat{A}'(0)+h(N))})}{1-\exp(-\frac{a^2}{2(\hat{A}'(0)+h(N))})}.\label{chap3.eq:prob first bound}
\end{align}
Thus, \eqref{chap3.eq:diff of A upp.1} and \eqref{chap3.eq:prob first bound} yield
\begin{align}
    &\sum_{n=1}^{K} \exp\left(-\frac{(a\cdot n+b)^2}{2N\cdot (A(\frac{n+k}{N})-A(\frac{k}{N}))}\right) \nonumber \\
    &\leq 
    \exp(-\frac{ab}{\hat{A}'(0)+h(N)})
    \frac{\exp(-\frac{a^2}{2(\hat{A}'(0)+h(N))})}{1-\exp(-\frac{a^2}{2(\hat{A}'(0)+h(N))})}.
    \label{eq:prob first bound.1}
\end{align}
Since $h(N)\rightarrow 0$ as $N\rightarrow\infty$, by the choice of parameters $a$ and $b$, we have
\begin{align}
\limsup_{N\rightarrow\infty} \frac{\exp(-\frac{a^2}{2(\hat{A}'(0)+h(N))})}{1-\exp(-\frac{a^2}{2(\hat{A}'(0)+h(N))})}\exp(-\frac{ab}{\hat{A}'(0)+h(N)}) 
&= \frac{e^{-a^2/(2\hat{A}'(0))}}{1-e^{-a^2/(2\hat{A}'(0))}}e^{-ab/\hat{A}'(0)} \nonumber \\
&\leq \frac{1}{10}. \label{chap3.eq:limit}
\end{align}
Combining \eqref{eq:prob first bound.1} and \eqref{chap3.eq:limit}, there exists $N_2=N_2(A,\beta)\in\N$ such that for all $N\in N_2$, the first term of \eqref{chap3.eq:prob.0} is bounded from above by 
\begin{align}
\sum_{n=1}^{K} \exp\left(-\frac{(a\cdot n+b)^2}{2N\cdot (A(\frac{n+k}{N})-A(\frac{k}{N}))}\right) \leq \frac{2}{10}. \label{chap3.eq:first bound}
\end{align}

It remains to bound the second term of \eqref{chap3.eq:prob.0}. By Assumption~\ref{chap3.assumption}, we know that $A(x)\geq 0$ for all $x\in [0,1]$ and $A(x)\leq \hat{A}(0)x$ for all $x\in [0,1]$. Thus,
\begin{align}
    N\cdot \left(A\left(\frac{n+k}{N}\right)-A\left(\frac{k}{N}\right) 
    \right)
    \leq N A\left(\frac{n+k}{N}\right)
    \leq \hat{A}'(0)(n+k). \label{eq:second difference bound}
\end{align}
For $n\geq K+1$ and $k\leq K$, we have 
\begin{align}
    1+\frac{k}{n} \leq 1+\frac{K}{K+1} \leq 2. \label{eq:bound by 2}
\end{align}
Combining \eqref{eq:second difference bound} and \eqref{eq:bound by 2} then yield for $n\geq K+1$ and $k\leq K$,
\begin{align}
    N\cdot \left(A\left(\frac{n+k}{N}\right)-A\left(\frac{k}{N}\right) 
    \right)
    \leq 2\hat{A}'(0) n. \label{eq:second difference bound.1}
\end{align}
Thus, by \eqref{eq:second difference bound.1} and the fact that $(an+b)^2 \geq a^2 n^2$ for all $a,b,n>0$,
\begin{align}
\sum_{n=K+1}^{N-k} \exp\left(-\frac{(a n+b)^2}{2N\cdot (A(\frac{n+k}{N})-A(\frac{k}{N}))}\right)
&\leq \sum_{n=K+1}^{N-k} \exp\left(-\frac{a^2n^2}{4\hat{A}(0) n}\right) \nonumber \\
&=
\sum_{n=K+1}^{N-k} \exp\left(-\frac{a^2}{4\hat{A}'(0)}n\right) \nonumber \\
&\leq 
\sum_{n=K+1}^{\infty} \exp\left(-\frac{a^2}{4\hat{A}'(0)}n\right) \nonumber \\
&\leq 
\exp(-\frac{a^2}{4\hat{A}'(0)}(K+1))\frac{1}{1-\exp(-\frac{a^2}{4\hat{A}'(0)})}. \label{chap3.eq:second bound}
\end{align}
By our choice of $K$, the limit superior of \eqref{chap3.eq:second bound} equals $0$ as $N\rightarrow\infty$. In particular, there exists $N_3=N_3(A,\beta)$ such that for all $N\geq N_2$, 
\begin{align}
&\sum_{n=K+1}^{N-k} \exp\left(-\frac{(a\cdot n+b)^2}{2N\cdot (A(\frac{n+k}{N})-A(\frac{k}{N}))}\right)
\leq \frac{1}{10}. \label{chap3.eq:second bound.1}
\end{align}
Let $N_0\coloneqq \max\{N_1,N_2,N_3\}$. Combining \eqref{chap3.eq:prob.0}, \eqref{chap3.eq:first bound} and \eqref{chap3.eq:second bound.1}, for all $N\geq N_0$ and for all $k\in\llbracket 0,K\rrbracket$, 
\begin{align}
\PR{\exists n\in \llbracket 1,N-k \rrbracket : X^{(k)}_{N-k}(n)> a\cdot n + b }
\leq \frac{3}{10}. \label{chap3.eq:3/10}
\end{align}
Combining \eqref{eq:expectation of part}, \eqref{chap3.eq:Z tr m1.1}, \eqref{chap3.eq:Z tr m1.1} and \eqref{chap3.eq:3/10}, we conclude that for all $N\geq N_0$ and for all $k\in\llbracket 0,K\rrbracket$,
\begin{align}
\EX{Z^{(k),\leq}_{\beta,N-k}} 
\geq \frac{7}{10} 2^{N-k}\exp(\frac{\beta^2 N}{2}\left(1-A\left(\frac{k}{N}\right)\right))
=
\EX{Z^{(k)}_{\beta,N-k}},\nonumber %\label{chap3.eq:first moment conclusion}
\end{align}
where the equality above follows from \eqref{eq:expectation of part}.
\end{proof}

It remains to provide a second moment estimate of the truncated partition function.

\begin{lemma}[Second moment estimate]
\label{chap3.lem:second moment}
Let $\beta<\beta_c$. 
For all 
\begin{align*}
    a\in \left(0,\frac{\log 2}{\beta}-\frac{1}{2}\beta \hat{A}'(0)\right)
    \quad \text{and} \quad 
    b>0, 
\end{align*}
there exist $N_0=N_0(A,\beta)\in\N$ and $C=C(A,\beta)>0$ such that for all $N\geq N_0$ and $k\in\llbracket 0,K \rrbracket$, where 
$K=\floor{2\log_\gamma N}$ with $\gamma\in(11/10,2)$,
we have
\begin{align*}
\EX{(Z^{(k),\leq}_{\beta,N-k})^2}
\leq C\EX{Z^{(k)}_{\beta,N-k}}^2. 
\end{align*}
\end{lemma}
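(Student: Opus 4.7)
My plan is to combine the many-to-two decomposition (Lemma~\ref{lem:many-to-two}) with the exponential tilting formula (Lemma~\ref{lem:exponential tilting}) at tilt $2\beta$, reducing the second moment to an explicit $\ell$-sum in which the barrier from the truncation supplies just enough Gaussian decay to offset the $L^2$ divergence above the classical $L^2$ critical temperature.

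After the many-to-two decomposition, the diagonal term $\EX{Z^{(k),\leq}_{\beta,N-k}} \leq \EX{Z^{(k)}_{\beta,N-k}}$ is exponentially smaller than $\EX{Z^{(k)}_{\beta,N-k}}^2$ and is absorbed into the constant $C$. For each off-diagonal $\ell$ I would decompose
\[
X^{(k)}_{\ell,N-k} + \tilde X^{(k)}_{\ell,N-k} = 2X^{(k)}_{\ell,\ell} + \bigl(X^{(k)}_{\ell,N-k}-X^{(k)}_{\ell,\ell}\bigr) + \bigl(\tilde X^{(k)}_{\ell,N-k}-\tilde X^{(k)}_{\ell,\ell}\bigr),
\]
use conditional independence of the two tails beyond level $\ell$, and bound both tail indicators by $1$; the tail Laplace transforms then contribute a factor $\exp(\beta^2 N(1-A((\ell+k)/N)))$, leaving $\EX{e^{2\beta X^{(k)}_{\ell,\ell}}\Ind_{B_\ell}}$ where $B_\ell$ is the restriction of $G_{\ell,k}$ to $n\in\llbracket 1,\ell\rrbracket$. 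Applying Lemma~\ref{lem:exponential tilting} with tilt $2\beta$ to this remaining expectation and estimating the tilted barrier-crossing probability by a Chernoff bound at $n=\ell$, the $\ell$-th off-diagonal contribution to the ratio $\EX{(Z^{(k),\leq}_{\beta,N-k})^2}/\EX{Z^{(k)}_{\beta,N-k}}^2$ is at most
\[
\tfrac12\,2^{-\ell}\exp\!\Bigl(\tfrac{\beta^2 R_\ell}{2} + \beta(a\ell+b) - \tfrac{(a\ell+b)^2}{2R_\ell}\Bigr), \qquad R_\ell := N\bigl(A((\ell+k)/N)-A(k/N)\bigr),
\]
with the trivial bound $\tfrac12\,2^{-\ell}e^{\beta^2 R_\ell}$ used whenever $\beta R_\ell\leq a\ell+b$.

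Summing over $\ell$ is the delicate part, handled by splitting at a threshold $L = \Theta(N)$ small enough that Lemma~\ref{chap3.lem:near zero} applies on $[0,(L+K)/N]$. For $\ell\leq L$, Lemma~\ref{chap3.lem:near zero} gives $R_\ell = \hat A'(0)\ell(1+o(1))$ uniformly in $k\in\llbracket 0,K\rrbracket$, so the exponent collapses to $\ell\bigl[\beta^2\hat A'(0)/2 + \beta a - a^2/(2\hat A'(0)) - \log 2\bigr] + O(1)$; writing $c := \log 2/\beta - \beta\hat A'(0)/2$, the identity $\beta c = \log 2 - \beta^2\hat A'(0)/2$ forces the bracket to equal $-c^2/(2\hat A'(0))$ at $a=c$ and to stay strictly negative on $(0,c)$ for $\beta<\beta_c$, so this range sums geometrically. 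For $\ell>L$, the concavity bound $R_\ell\leq\hat A'(0)(\ell+k)$ (from $A\leq\hat A$ and $\hat A(0)=0$) gives $\rho := R_\ell/\ell \leq \hat A'(0)(1+o(1))$; substituting and using the same identity for $c$, the per-$\ell$ exponent is at most $\beta^2(\rho-\hat A'(0))/2 - c^2/(2\rho) + o(1) \leq -c^2/(2\hat A'(0)) + o(1)$, so this range also sums geometrically.

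The main obstacle is the last step: the exponent is monotone increasing in $R_\ell$, so the right upper bound on $R_\ell$ (the near-zero Taylor expansion versus the concavity bound $R_\ell\leq\hat A'(0)(\ell+k)$) must be paired with the right regime of $\ell$, and the sharp algebraic identity produced by the boundary value of $a$ is what makes the negative decay rate $-c^2/(2\hat A'(0))$ appear in both regimes simultaneously, thereby extending the second moment bound throughout the entire high-temperature range rather than just below the naive $L^2$ critical temperature $\sqrt{\log 2/\hat A'(0)}$.
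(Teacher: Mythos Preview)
Your skeleton matches the paper's proof closely: many-to-two, restrict the truncation to the single barrier at level $\ell$, factor out the two independent tails, tilt the common trunk at $2\beta$, then split the $\ell$-sum into a short range (Lemma~\ref{chap3.lem:near zero} controls $R_\ell$) and a long range (the concave-hull bound $A(x)\le\hat A'(0)x$ controls it). Two points of comparison are worth making.

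First, the paper splits at $K_1=K^2=\Theta((\log N)^2)$, not at $L=\Theta(N)$. With your choice, Lemma~\ref{chap3.lem:near zero} only yields $R_\ell/\ell\le\hat A'(0)+O((L/N)^\alpha)$ on the short range, a fixed $O(1)$ perturbation, so your claim ``$R_\ell=\hat A'(0)\ell(1+o(1))$'' is not literally true. The paper's $K^2$ split makes the short-range error genuinely $o(1)$, and on the long range the additive overshoot $\hat A'(0)K$ is absorbed because the geometric sum starts at $K^2+1$.

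Second, and more importantly, you work harder than needed by keeping the quadratic Chernoff contribution $-(a\ell+b)^2/(2R_\ell)$. The paper simply discards it: after the same case distinction it records
\[
\EX{e^{2\beta X^{(k)}_\ell}\Ind_{H^{(k)}_\ell}}\le\exp\!\Bigl(\tfrac32\beta^2 R_\ell+\beta(a\ell+b)\Bigr),
\]
so the ratio term is $2^{-\ell}\exp\!\bigl(\tfrac12\beta^2 R_\ell+\beta(a\ell+b)\bigr)$ and the per-$\ell$ exponent is $-\bigl(\log 2-\tfrac12\beta^2\hat A'(0)-\beta a\bigr)+o(1)$. This is strictly negative precisely by the \emph{hypothesis} $a<\log 2/\beta-\tfrac12\beta\hat A'(0)$, and that hypothesis is nonempty for every $\beta<\beta_c$. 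Your final paragraph's claim that the sharp identity coming from the retained quadratic term is what extends the bound past the $L^2$ threshold $\sqrt{\log 2/\hat A'(0)}$ is therefore a misconception: the admissible range for $a$ already does that job, and the extra decay you keep buys nothing here. (Your large-$\ell$ formula $\beta^2(\rho-\hat A'(0))/2-c^2/(2\rho)$ is also written only at the boundary $a=c$, not for general $a\in(0,c)$.) Dropping the quadratic term as the paper does removes both the algebraic gymnastics and the need to pair regimes so delicately.
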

\begin{proof}
Fix $\beta<\beta_c$ and $a\in (0,\frac{\log 2}{\beta}-\frac{1}{2}\beta \hat{A}'(0))$. Define
\begin{align}
c=c(A,\beta)\coloneqq \log 2 - \frac{1}{2}\beta^2\hat{A}'(0)-a >0,\label{chap3.eq:small c}
\end{align}
where the positivity follows from the definition of $\beta_c$ in \eqref{chap3.eq:threshold}.

By Lemma~\ref{lem:many-to-two}, we have
\begin{align}
&\EX{(Z^{(k),\leq}_{\beta,N-k})^2} \nonumber \\
        &= 2^{N-k} \EX{e^{2\beta X_{N-k}^{(k)}} \Ind_{G_{k}}} 
        + \sum_{\ell=0}^{N-k-1} 2^{2(N-k)-\ell-1} \EX{e^{\beta (X^{(k)}_{\ell,N-k}+\tilde{X}_{\ell,N-k}^{(k)})} \Ind_{G_{\ell,k}\cap \tilde{G}_{\ell,k}}}. \label{chap3.eq:many-to-two}
\end{align}
We now want to estimate the expectations in \eqref{chap3.eq:many-to-two}. For all $\ell\in\llbracket 1,N\rrbracket$ and $n\in\llbracket 1, N-\ell\rrbracket$, define 
\begin{align*}
    H_n^{(\ell)} = \left\{X_{n}^{(\ell)}\leq \beta N \left(A\left(\frac{\ell+n}{N}\right)-A\left(\frac{\ell}{N}\right)\right) + a n + b \right\}.
\end{align*}
Then, we see that the expectation in the first term of \eqref{chap3.eq:many-to-two} is bounded from above by
\begin{align}
    \EX{e^{2\beta X_{N-k}^{(k)}} \Ind_{G_{k}}}
    \leq \EX{e^{2\beta X^{(k)}_{N-k}}\Ind_{H_{N-k}^{(k)}}}.
\end{align}
Fix $\ell\in\llbracket 0,N-k-1\rrbracket$.  We have
\begin{align}
G_{\ell,k}\cap \tilde{G}_{\ell,k} \subseteq \left\{X_{\ell,\ell}^{(k)}\leq \beta N \left(A\left(\frac{\ell+k}{N}\right)-A\left(\frac{k}{N}\right)\right) + a \ell + b \right\} = H_{\ell,\ell}^{(k)}. \label{chap3.eq:coarser truncation}
\end{align}
Then, we have
\begin{align}
\EX{e^{\beta (X^{(k)}_{\ell,N-k}+\tilde{X}_{\ell,N-k}^{(k)})} \Ind_{G_{\ell,k}\cap \tilde{G}_{\ell,k}}} 
&\leq 
\EX{e^{\beta (X^{(k)}_{\ell,N-k}+\tilde{X}_{\ell,N-k}^{(k)})} \Ind_{H_{\ell,\ell}^{(k)}}
} \label{eq:intersection bound} \\
&= \EX{e^{2\beta X_{\ell,\ell}^{(k)}} \Ind_{H_{\ell,\ell}^{(k)}}
} 
\EX{e^{\beta (X^{(k)}_{\ell,N-k}+\tilde{X}^{(k)}_{\ell,N-k}-2X^{(k)}_{\ell,\ell})}} \label{eq:factorization} \\
&= \EX{e^{2\beta X_{\ell}^{(k)}} 
\Ind_{H_{\ell}^{(k)}
}
}
\EX{e^{\beta X^{(\ell+k)}_{N-\ell-k}}}^2 \label{chap3.eq:two terms.2.1} \\
&=
\EX{e^{2\beta X_{\ell}^{(k)}} 
\Ind_{H_{\ell}^{(k)}
}
}
\exp(\beta^2 N \left(1-A\left(\frac{\ell+k}{N}\right)\right)),
\label{chap3.eq:two terms.2}
\end{align}
where \eqref{eq:intersection bound} follows from \eqref{chap3.eq:coarser truncation}, and both \eqref{eq:factorization} and \eqref{chap3.eq:two terms.2.1} from Lemma~\ref{lem:many-to-two}.

For $\ell\in\llbracket 0,N-k\rrbracket$, Lemma~\ref{lem:exponential tilting} yields
\begin{align}
    \EX{e^{2\beta X_{\ell}^{(k)}} \Ind_{H_{\ell}^{(k)}}}
    &=
    \exp(2\beta^2 N \left(A\left(\frac{\ell+k}{N}\right)-A\left(\frac{k}{N}\right)\right)) \nonumber \\
    &\quad\quad\quad
    \PR{
    X_{\ell}^{(k)}\leq -\beta N \left(A\left(\frac{k+\ell}{N}\right)-A\left(\frac{k}{N}\right)\right) + a \ell + b
    }. \label{eq:H bound}
\end{align}
We distinguish the following two cases.

\noindent \textbf{Case 1: $-\beta N \left(A\left(\frac{\ell+k}{N}\right)-A\left(\frac{k}{N}\right)\right) + a \ell + b\geq 0$.} In this case, by bounding the probability in \eqref{eq:H bound} by $1$, we obtain
\begin{align}
\eqref{eq:H bound} 
&\leq \exp(2\beta^2 N \left(A\left(\frac{\ell+k}{N}\right)-A\left(\frac{k}{N}\right)\right)) \nonumber \\
&\leq \exp(\beta^2 N \left(A\left(\frac{\ell+k}{N}\right)-A\left(\frac{k}{N}\right)\right))\exp(\beta(a\ell+b)). \label{chap3.eq:case distinction.1}
\end{align}

\noindent \textbf{Case 2: $-\beta N \left(A\left(\frac{\ell+k}{N}\right)-A\left(\frac{k}{N}\right)\right) + a \ell + b<0$.} In this case, the Chernoff bound yields
\begin{align}
\eqref{eq:H bound}
&\leq \exp(2\beta^2 N \left(A\left(\frac{\ell+k}{N}\right)-A\left(\frac{k}{N}\right)\right)) \nonumber \\
&\quad\quad
\cdot \exp( -\frac{\left(\beta N (A\left(\frac{\ell+k}{N}\right)-A\left(\frac{k}{N}\right)) - (a \ell+b)\right)^2}{2N \left(A\left(\frac{\ell+k}{N}\right)-A\left(\frac{k}{N}\right)\right)} ). \label{eq:H bound.1}
\end{align}
By the fact that 
\begin{align*}
    &\left(\beta N \left(A\left(\frac{\ell+k}{N}\right)-A\left(\frac{k}{N}\right)\right) - (a \ell+b)\right)^2 \nonumber \\
    &\geq
    \beta^2 N^2\left(A\left(\frac{\ell+k}{N}\right)-A\left(\frac{k}{N}\right)\right)^2 - 2 \beta N \left(A\left(\frac{\ell+k}{N}\right)-A\left(\frac{k}{N}\right)\right)(a \ell+b),
\end{align*}
\eqref{eq:H bound.1} yields the bound
\begin{align}
\eqref{eq:H bound}
&\leq \exp(\frac{3}{2}\beta^2 N \left(A\left(\frac{\ell+k}{N}\right)-A\left(\frac{k}{N}\right)\right)) \exp(\beta (a \ell + b)). \label{chap3.eq:1st term}
\end{align}
From the case study above, we conclude that 
\begin{align}
    \EX{e^{2\beta X_{\ell}^{(k)}} \Ind_{H_{\ell}^{(k)}}}
    \leq \exp(\frac{3}{2}\beta^2 N \left(A\left(\frac{\ell+k}{N}\right)-A\left(\frac{k}{N}\right)\right)) \exp(\beta (a \ell + b)). \label{eq:case study}
\end{align}

Applying the bounds \eqref{chap3.eq:two terms.2} and \eqref{eq:case study} to \eqref{chap3.eq:many-to-two}, we obtain 
\begin{align}
    &\EX{(Z^{(k),\leq}_{\beta,N-k})^2} \nonumber \\
    &\leq 
    2^{N-k} \exp(\frac{3}{2}\beta^2 N \left(A\left(\frac{1}{N}\right)-A\left(\frac{k}{N}\right)\right)) \exp(\beta (a (N-k) + b)) \nonumber \\
    &+ \sum_{\ell=0}^{N-k-1} 2^{2(N-k)-\ell-1} \exp(\frac{3}{2}\beta^2 N \left(A\left(\frac{\ell+k}{N}\right)-A\left(\frac{k}{N}\right)\right)) \exp(\beta (a \ell + b))
    \exp(\beta^2 N \left(1-A\left(\frac{\ell+k}{N}\right)\right)) \nonumber \\
    &\leq 
    2^{2(N-k)}\exp(\beta^2 N \left(1-A\left(\frac{k}{N}\right)\right)) \nonumber \\
    &\quad\quad
    \sum_{\ell=0}^{N-k} 2^{-\ell} 
    \exp(\frac{1}{2}\beta^2 N \left(A\left(\frac{\ell+k}{N}\right)-A\left(\frac{k}{N}\right)\right)) \exp(\beta (a \ell + b)).
    \label{eq:truncated bound result.0} \\
    &=
    \EX{Z_{\beta,N-k}^{(k)}}^2  \sum_{\ell=0}^{N-k} 2^{-\ell} 
    \exp(\frac{1}{2}\beta^2 N \left(A\left(\frac{\ell+k}{N}\right)-A\left(\frac{k}{N}\right)\right)) \exp(\beta (a \ell + b)), \label{eq:truncated bound result.1}
\end{align}
where \eqref{eq:truncated bound result.0} follows from \eqref{eq:expectation of part} and the fact that $2^{-\ell-1}\leq 2^{-\ell}$ for any integer $\ell$. Now, we decompose the summation in \eqref{eq:truncated bound result.1} into
\begin{align}
    &\sum_{\ell=0}^{N-k} 2^{-\ell} 
    \exp(\frac{1}{2}\beta^2 N \left(A\left(\frac{\ell+k}{N}\right)-A\left(\frac{k}{N}\right)\right)) \exp(\beta (a \ell + b)) \nonumber \\
    &= 
    \sum_{\ell=0}^{K_1} 2^{-\ell} 
    \exp(\frac{1}{2}\beta^2 N \left(A\left(\frac{\ell+k}{N}\right)-A\left(\frac{k}{N}\right)\right)) \exp(\beta (a \ell + b)) \nonumber \\
    &+ 
    \sum_{\ell=K_1+1}^{N-k} 2^{-\ell} 
    \exp(\frac{1}{2}\beta^2 N \left(A\left(\frac{\ell+k}{N}\right)-A\left(\frac{k}{N}\right)\right)) \exp(\beta (a \ell + b)),\label{eq:summation decomposition}
\end{align}
where $K_1=K^2$.
We start with bounding the first term in \eqref{eq:summation decomposition}. Take $N_1$ such that $(K_1+K)/N\leq x_1$ for all $N\geq N_1$.
Following the same derivation for \eqref{chap3.eq:diff of A upp}, for all $N\geq N_1$, $k\in\llbracket 1,K\rrbracket$ and $\ell\in\llbracket 0,K_1\rrbracket$, we have
\begin{align}
    N \left(A\left(\frac{\ell+k}{N}\right)-A\left(\frac{k}{N}\right)\right) \leq \hat{A}'(0)\ell + h(N), \label{eq:difference bound.2}
\end{align}
where $h(N)$ is defined in \eqref{eq:h(N)}.  
Applying \eqref{eq:difference bound.2} then yields
\begin{align}
    \text{first term in \eqref{eq:summation decomposition}}
    &\leq \exp(h(N)+\beta b)\sum_{\ell=0}^{K_1} \exp(-(\log 2 - \frac{\beta^2}{2}\hat{A}(0)-\beta a)\ell) \nonumber \\
    &= \exp(h(N)+\beta b)\sum_{\ell=0}^{K_1} \exp(-c\ell) \nonumber \\
    &\leq \exp(h(N)+\beta b)\sum_{\ell=0}^{\infty} \exp(-c\ell) \nonumber \\
    &= \exp(h(N)+\beta b) \frac{1}{1-e^{-c}}. \label{eq:first term bound}
\end{align}
It remains to bound the second term in \eqref{eq:summation decomposition}. As in \eqref{eq:second difference bound}, Assumption~\ref{chap3.assumption} implies that $A(x)\geq 0$ for all $x\in [0,1]$ and $A(x)\leq \hat{A}(0)x$ for all $x\in [0,1]$, and so
\begin{align}
    N\cdot \left(A\left(\frac{\ell+k}{N}\right)-A\left(\frac{k}{N}\right) 
    \right)
    \leq \hat{A}'(0)(\ell+k) \leq \hat{A}'(0)\ell + \hat{A}'(0)K,
\end{align}
where the second inequality above is by the assumption that $k\leq K$. Thus, 
\begin{align}
    \text{second term in \eqref{eq:summation decomposition}}
    &\leq \exp(\hat{A}'(0)K+\beta b)
    \sum_{\ell=K_1+1}^{N-k} \exp(-(\log 2 - \frac{\beta^2}{2}\hat{A}'(0)-\beta a)\ell) \nonumber \\
    &= \exp(\hat{A}'(0)K+\beta b)
    \sum_{\ell=K_1+1}^{N-k} \exp(-c\ell) \nonumber \\
    &\leq \exp(\hat{A}'(0)K+\beta b)
    \sum_{\ell=K_1+1}^{\infty} \exp(-c\ell) \nonumber \\
    &= \exp(\hat{A}'(0)K+\beta b)\frac{e^{-c(K_1+1)}}{1-e^{-c}} \nonumber \\
    &= \exp(-L(N)+\beta b) \frac{e^{-c}}{1-e^{-c}}, \label{eq:second term bound}
\end{align}
where $L(N) = c K_1 - \hat{A}'(0)K = c K^2 - \hat{A}'(0)K$. Combining \eqref{eq:summation decomposition}, \eqref{eq:first term bound} and \eqref{eq:second term bound}, we derive that
\begin{align}
    &\sum_{\ell=0}^{N-k} 2^{-\ell} 
    \exp(\frac{1}{2}\beta^2 N \left(A\left(\frac{\ell+k}{N}\right)-A\left(\frac{k}{N}\right)\right)) \exp(\beta (a \ell + b)) \nonumber \\
    &\leq 
    \exp(h(N)+\beta b) \frac{1}{1-e^{-c}} + \exp(-L(N)+\beta b) \frac{e^{-c}}{1-e^{-c}}. \label{eq:bound conclusion}
\end{align}
Note that $h(N)\rightarrow 0$ and $L(N)\rightarrow\infty$ as $N\rightarrow\infty$, so there exists $N_2$ such that $h(N)\leq 1$ and $L(N)\geq 1$ for all $N\geq N_2$. Take $N_0=\max\{N_1,N_2\}$. For all $N\geq N_0$,
we conclude from \eqref{eq:truncated bound result.1} and \eqref{eq:bound conclusion}
\begin{align*}
    \EX{(Z^{(k),\leq}_{\beta,N-k})^2}
    \leq C \EX{Z^{(k)}_{\beta,N-k}}^2
\end{align*}
where $C = C(A,\beta) = \exp(1+\beta b) \frac{1}{1-e^{-c}} + \exp(-1+\beta b) \frac{e^{-c}}{1-e^{-c}}$.
\end{proof}

\subsection{Proof of Proposition~\ref{chap3.prop:init}} \label{chap3.sec:proof of prop:init}

We are now ready to prove Proposition~\ref{chap3.prop:init}.

\begin{proof}[Proof of Proposition~\ref{chap3.prop:init}]
Fix $\beta<\beta_c$. By Lemma~\ref{chap3.lem:first moment} and Lemma~\ref{chap3.lem:second moment}, if we choose
\begin{align}
a=\frac{1}{2\beta}(\log 2 -\frac{1}{2}\beta^2\hat{A}'(0)) \quad \text{and} \quad b = \frac{\hat{A}'(0)}{a}\log(10\max\left\{\frac{e^{-a^2/(2\hat{A}'(0))}}{1-e^{-a^2/(2\hat{A}'(0))}},1\right\}), \label{chap3.eq:choices of a and b}
\end{align}
%\TODO{Is $b$ positive??} Yes!
then there exist $N_1=N_1(A,\beta)\in\N$, $N_2=N_2(A,\beta)\in\N$ and $C=C(A,\beta)>0$ such that for all $N\geq N_1$ and $k\in\llbracket 0,K\rrbracket$,
\begin{align}
\EX{Z_{\beta,N-k}^{(k),\leq}} \geq \frac{7}{10}\EX{Z_{\beta,N-k}^{(k)}}, \label{chap3.eq:first moment conclusion}
\end{align}
and for all $N\geq N_2$ and $k\in\llbracket 0,K\rrbracket$,
\begin{align}
\EX{(Z_{\beta,N-k}^{(k),\leq})^2} \leq C \EX{Z_{\beta,N-k}^{(k)}}^2. \label{chap3.eq:second moment conclusion}
\end{align}

Now, since $Z^{(k),\leq}_{\beta,N-k} \leq Z^{(k)}_{\beta,N-k}$, we have 
\begin{align}
\PR{Z^{(k)}_{\beta,N-k} > \frac{1}{2} \EX{Z^{(k)}_{\beta,N-k}}}
&\geq \PR{Z^{(k),\leq}_{\beta,N-k} > \frac{1}{2}  \EX{Z^{(k)}_{\beta,N-k}}}. \label{chap3.eq:trivial bound for truncated partition function}
\end{align}
On the other hand, by the Cauchy--Schwarz inequality, we have
\begin{align}
&\EX{Z^{(k),\leq}_{\beta,N-k}} \nonumber \\
&= \EX{Z^{(k),\leq}_{\beta,N-k}\Ind\left\{Z^{(k),\leq}_{\beta,N-k} > \frac{1}{2}\EX{Z^{(k)}_{\beta,N-k}}\right\}}
+ \underbrace{\EX{Z^{(k),\leq}_{\beta,N-k}\Ind\left\{Z^{(k),\leq}_{\beta,N-k} \leq \frac{1}{2}\EX{Z^{(k)}_{\beta,N-k}}\right\}}}_{\leq \frac{1}{2}\EX{Z^{(k)}_{\beta,N-k}}} \nonumber \\
&\leq \EX{(Z^{(k),\leq}_{\beta,N-k})^2}^{1/2}\PR{Z^{(k),\leq}_{\beta,N-k} > \frac{1}{2} \EX{Z^{(k)}_{\beta,N-k}}}^{1/2} 
+ \frac{1}{2} \EX{Z^{(k)}_{\beta,N-k}}. \label{chap3.eq:Paley-Zygmund}
\end{align}
Let $N_0 = N_0(A,\beta) \coloneqq \max\{N_1,N_2\}$. Combining \eqref{chap3.eq:first moment conclusion}, \eqref{chap3.eq:second moment conclusion} and \eqref{chap3.eq:Paley-Zygmund}, we derive that for all $N\geq N_0$ and $k\in\llbracket 0,K\rrbracket$, 
\begin{align}
\PR{Z^{(k),\leq}_{\beta,N-k}\geq \frac{1}{2} \EX{Z^{(k)}_{\beta,N-k}}}^{1/2}
\geq \frac{\displaystyle \EX{Z^{(k),\leq}_{\beta,N-k}}-\frac{1}{2} \EX{Z^{(k)}_{\beta,N-k}}}{\EX{(Z^{(k),\leq}_{\beta,N-k})^2}^{1/2}}
\geq \frac{2}{10\sqrt{C}}. \label{chap3.eq:lower bound}
\end{align}
Combining \eqref{chap3.eq:trivial bound for truncated partition function} and \eqref{chap3.eq:lower bound},
we conclude that for all $N\geq N_0$ and $k\in\llbracket 0,K\rrbracket$, 
\begin{align}
\PR{Z^{(k)}_{\beta,N-k} \leq \frac{1}{2} \EX{Z^{(k)}_{\beta,N-k}}}
&= 1- \PR{Z^{(k)}_{\beta,N-k} > \frac{1}{2} \EX{Z^{(k)}_{\beta,N-k}}} \nonumber \\
&\leq 1- \PR{Z^{(k),\leq}_{\beta,N-k} > \frac{1}{2}\cdot \EX{Z^{(k)}_{\beta,N-k}}} \nonumber \\
&\leq 1- \frac{4}{100C}. \nonumber 
\end{align}
By taking $\eta_0 = \eta_0(A,\beta) = 1-\frac{4}{100 C}$, the proof is completed.
\end{proof}

\section{Left tail estimates via a bootstrap argument} \label{chap3.sec:bootstrap}

The main goal of this section is to provide a finer estimate of the left tail of $Z_{\beta,N}$ than the one provided by Proposition~\ref{chap3.prop:init}. Namely, we want to construct two sequences that satisfy \eqref{chap3.eq:double exponential decay}, and this is manifested in the following proposition.
\begin{proposition}
\label{chap3.prop:bootstrap}
Let $K=\floor{2 \log_\gamma N}$ with $\gamma\in (11/10,2)$. Then, there exist $N_0=N_0(A,\beta)\in\N$ and two sequences $(\eta_k)_{k=0}^K$ and $(\varepsilon_k)_{k=0}^K$ such that
\begin{enumerate}[label = (\roman*)]
\item for all $N\geq N_0$ and $k\in \llbracket 0,K \rrbracket$,
\begin{align*}
    \PR{Z_{\beta,N} \leq \varepsilon_k \EX{Z_{\beta,N}}} \leq \eta_k.
\end{align*}
\item for all $s>0$, there exists a constant $C=C(A,\beta,s)>0$, independent of $K$, such that 
\begin{align*}
\sum_{r=0}^{K-1} (\varepsilon_{r+1})^{-s} \eta_r \leq C.
\end{align*}
\end{enumerate}
\end{proposition}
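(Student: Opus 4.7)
I would prove by induction on $r$ the following strengthening of item (i): there exist sequences $(\varepsilon_r)_{r=0}^K$, $(\eta_r)_{r=0}^K$ such that, for every $r\in\llbracket 0,K\rrbracket$ and every $k\in\llbracket 0,K-r\rrbracket$,
\[
\PR{Z^{(k)}_{\beta,N-k}\leq \varepsilon_r\EX{Z^{(k)}_{\beta,N-k}}}\leq \eta_r.
\]
The base case $r=0$ is exactly Proposition~\ref{chap3.prop:init} with $\varepsilon_0=1/2$ and $\eta_0 = \eta_0(A,\beta)\in(0,1)$. Item (i) is then recovered by specializing to $k=0$.

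The inductive step uses the branching decomposition of the Gaussian process $X^{(k)}$ at depth $1$. Writing $u_1,u_2$ for the two children of the root, a direct covariance computation (using $|u_1\wedge u_2|=0$ and the definition of $X^{(k)}$) shows
\[
Z^{(k)}_{\beta,N-k}\stackrel{(d)}{=}e^{\beta X^{(k)}_{u_1}}\tilde Z^{(k+1),1}_{\beta,N-k-1}+e^{\beta X^{(k)}_{u_2}}\tilde Z^{(k+1),2}_{\beta,N-k-1},
\]
where the two Gaussian weights (each of variance $\sigma_k^2 := N(A((k+1)/N)-A(k/N))$) and the two i.i.d.\ copies $\tilde Z^{(k+1),i}$ of $Z^{(k+1)}_{\beta,N-k-1}$ are mutually independent. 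Normalizing via $\EX{Z^{(k)}_{\beta,N-k}}=2\EX{e^{\beta X^{(k)}_{u_1}}}\EX{Z^{(k+1)}_{\beta,N-k-1}}$ and setting $Y^{(k)}_i$ and $W_i$ to be the corresponding mean-one ratios, the event $\{Z^{(k)}_{\beta,N-k}\leq \varepsilon_{r+1}\EX{Z^{(k)}_{\beta,N-k}}\}$ is contained in the intersection of the independent events $\{Y^{(k)}_i W_i\leq 2\varepsilon_{r+1}\}$ for $i=1,2$. Splitting each via $\{YW\leq 2\varepsilon_{r+1}\}\subseteq \{Y\leq 2\varepsilon_{r+1}/\varepsilon_r\}\cup\{W\leq \varepsilon_r\}$ and invoking the induction hypothesis at level $k+1\leq K-r$ yields
\[
\PR{Z^{(k)}_{\beta,N-k}\leq \varepsilon_{r+1}\EX{Z^{(k)}_{\beta,N-k}}}\leq (q_r+\eta_r)^2, \qquad q_r := \sup_{k\leq K-r-1}\PR{Y^{(k)}\leq 2\varepsilon_{r+1}/\varepsilon_r}.
\]
Lemma~\ref{chap3.lem:near zero} bounds $\sigma_k^2\leq \hat{A}'(0)+h(N)\leq C$ uniformly over $k\leq K$ for $N$ large, so a Chernoff bound on the Gaussian $\beta X^{(k)}_{u_1}-\beta^2\sigma_k^2/2$ gives $q_r\leq \exp(-c_1(\log(2\varepsilon_{r+1}/\varepsilon_r))^2)$ for $\varepsilon_{r+1}/\varepsilon_r$ small, with $c_1=c_1(A,\beta)>0$ uniform in $k$.

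To construct the sequences I would fix $\delta>0$ with $\eta_0(1+\delta)<1$ and choose $\varepsilon_{r+1}$ just small enough that $q_r\leq (\sqrt{1+\delta}-1)\eta_r$; the Gaussian tail bound then forces $\log(\varepsilon_r/\varepsilon_{r+1})\asymp \sqrt{-\log\eta_r}$. The resulting recursion $\eta_{r+1}\leq (1+\delta)\eta_r^2$ gives, after setting $a_r=-\log\eta_r$, $a_r\geq 2^r b_0$ with $b_0 := -\log(\eta_0(1+\delta))>0$, so $\eta_r\leq e^{-b_0 2^r}$ decays doubly exponentially, while $|\log\varepsilon_r|\lesssim 2^{r/2}$. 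Item (ii) follows since
\[
\varepsilon_{r+1}^{-s}\eta_r\leq \exp(Cs\cdot 2^{r/2}-b_0 2^r),
\]
which is summable to a constant $C(A,\beta,s)$ independent of $K$.

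The principal obstacle is the joint calibration of the two sequences: the quadratic exponent in the Gaussian tail $\exp(-c_1(\log\theta)^2)$ is precisely what allows $|\log\varepsilon_r|$ to grow only like $2^{r/2}$ while $|\log\eta_r|$ grows like $2^r$, which is the gap that makes item (ii) hold. The uniformity in $k\leq K$ of the constant $c_1$, obtained through Lemma~\ref{chap3.lem:near zero} and Assumption~\ref{chap3.assumption}(iii), is what permits the induction to propagate through all $r\leq K$; without the upper bound on $\sigma_k^2$ near the root, $q_r$ could fail to be small uniformly in $k$ and the bootstrap would break down.
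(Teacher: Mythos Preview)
Your proof is correct and follows the same architecture as the paper: the same strengthened induction hypothesis over all shifts $k\in\llbracket 0,K-r\rrbracket$, the same one-step branching decomposition (the paper's Lemma~\ref{chap3.lem:bootstrap}), and the same splitting of $\{YW\le c\delta\}$ into $\{W\le c\}\cup\{Y\le\delta\}$.

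The one genuine difference is in how you control the one-step weight $Y^{(k)}$. The paper bounds $\PR{M^{(k)}_{\beta,1}\le\delta}$ via Markov's inequality applied to $\EX{(M^{(k)}_{\beta,1})^{-10s}}$ (Lemma~\ref{chap3.lem:one step}), obtaining only a \emph{polynomial} tail $C\delta^{10s}$. This forces $|\log\varepsilon_k|$ and $|\log\eta_k|$ to grow at the \emph{same} exponential rate $\gamma^k$, and the paper must take $\gamma<2$ and insert the artificial exponent $1/10$ so that the coefficient in front of $\gamma^k$ for $|\log\varepsilon_k|$ stays strictly smaller than that for $|\log\eta_k|$; this is why the constraint $\gamma\in(11/10,2)$ and the somewhat opaque formula for $\varepsilon_k$ appear. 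You instead exploit the Gaussian structure directly via a Chernoff bound, obtaining the much sharper tail $\exp(-c_1(\log\delta)^2)$. This lets you push the recursion all the way to $\eta_{r+1}\le(1+\delta)\eta_r^2$, i.e.\ $|\log\eta_r|\asymp 2^r$, while $|\log\varepsilon_r|$ only grows like $2^{r/2}$; the square-root gap makes item~(ii) immediate and the construction cleaner. The price is that your argument is specific to Gaussian increments, whereas the paper's Markov-based step would transfer verbatim to any increment law with finite negative moments.

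One small point to tighten when you write it out: to get the upper bound $|\log\varepsilon_r|\lesssim 2^{r/2}$ you need $a_j=-\log\eta_j$ bounded above by a constant times $2^j$, not just below. This holds if you \emph{define} $\eta_{r+1}=(1+\delta)\eta_r^2$ as an equality (so $a_r=2^r b_0+\log(1+\delta)$ exactly) and then choose $\varepsilon_{r+1}$ to match; your phrase ``just small enough'' already suggests this, but make it explicit.
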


The proof of Proposition~\ref{chap3.prop:bootstrap} requires the following two lemmas. 
For all $k\in\llbracket 0,N-1\rrbracket$, define
\begin{align*}
M^{(k)}_{\beta,1} = \frac{e^{\beta X^{(k)}_{1}}}{\EX{Z^{(k)}_{\beta,1}}},
\end{align*}
where $X^{(k)}_1$ is defined in Lemma~\ref{lem:many-to-one}.
The first lemma states a bootstrap inequality.
\begin{lemma}
\label{chap3.lem:bootstrap} 
For all $c>0$, $\delta>0$, $N\in\N$, and $k\in \llbracket 0,N-1\rrbracket$, we have
\begin{align*}
\PR{Z^{(k)}_{\beta,N-k} \leq c\delta\cdot \EX{Z^{(k)}_{\beta,N-k}}}
&\leq \left[\PR{Z^{(k+1)}_{\beta,N-k-1} \leq c \cdot \EX{Z^{(k+1)}_{\beta,N-k-1}}} + \PR{M^{(k)}_{\beta,1} \leq \delta} \right]^2.
\end{align*}
\end{lemma}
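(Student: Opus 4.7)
The plan is to exploit the one-step recursive decomposition of $Z^{(k)}_{\beta,N-k}$ over the two children of the root at depth $k$, and then use the independence of the two subtrees together with a simple union bound to turn the estimate on $Z^{(k)}_{\beta,N-k}$ into an estimate on $Z^{(k+1)}_{\beta,N-k-1}$ and on $M^{(k)}_{\beta,1}$.

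Concretely, let $u_1,u_2$ be the two vertices at level $1$ of the tree underlying $Z^{(k)}_{\beta,N-k}$. By the branching property of the CREM, I can write
\[
Z^{(k)}_{\beta,N-k} \;=\; e^{\beta X^{(k)}_{u_1}}\, Z^{u_1,(k+1)}_{\beta,N-k-1} \;+\; e^{\beta X^{(k)}_{u_2}}\, Z^{u_2,(k+1)}_{\beta,N-k-1},
\]
where $(X^{(k)}_{u_1},X^{(k)}_{u_2})$ are independent of the two subtree partition functions, and the $Z^{u_i,(k+1)}_{\beta,N-k-1}$ are i.i.d.\ copies of $Z^{(k+1)}_{\beta,N-k-1}$. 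Using the factorization $\EX{Z^{(k)}_{\beta,N-k}}=\EX{Z^{(k)}_{\beta,1}}\cdot\EX{Z^{(k+1)}_{\beta,N-k-1}}$, which follows immediately from \eqref{eq:expectation of part}, I normalize and set $M_i \coloneqq e^{\beta X^{(k)}_{u_i}}/\EX{Z^{(k)}_{\beta,1}}$ and $W_i \coloneqq Z^{u_i,(k+1)}_{\beta,N-k-1}/\EX{Z^{(k+1)}_{\beta,N-k-1}}$, so that the event in question rewrites as
\[
\bigl\{Z^{(k)}_{\beta,N-k}\leq c\delta\,\EX{Z^{(k)}_{\beta,N-k}}\bigr\} \;=\; \bigl\{M_1 W_1 + M_2 W_2 \leq c\delta\bigr\}.
\]

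Next, since $M_i,W_i\geq 0$, the sum being at most $c\delta$ forces each of the two terms to be at most $c\delta$; together with the independence of $(M_1,W_1)$ and $(M_2,W_2)$ this yields
\[
\PR{M_1 W_1 + M_2 W_2 \leq c\delta} \;\leq\; \PR{M_1 W_1\leq c\delta}\,\PR{M_2 W_2\leq c\delta} \;=\; \PR{M_1 W_1\leq c\delta}^2.
\]
Finally, I apply the elementary pointwise inclusion $\{MW\leq c\delta\}\subseteq\{M\leq\delta\}\cup\{W\leq c\}$ (valid for nonnegative $M,W$), and conclude by the union bound that $\PR{M_1 W_1\leq c\delta}\leq \PR{M^{(k)}_{\beta,1}\leq\delta}+\PR{Z^{(k+1)}_{\beta,N-k-1}\leq c\,\EX{Z^{(k+1)}_{\beta,N-k-1}}}$, which is exactly the stated bound after squaring.

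No step here is really an obstacle: the proof is a clean application of the branching property plus a two-line union bound. The only thing to be careful about is the bookkeeping in the normalization, namely that $\EX{Z^{(k)}_{\beta,N-k}}$ genuinely factors as $\EX{Z^{(k)}_{\beta,1}}\,\EX{Z^{(k+1)}_{\beta,N-k-1}}$; this is a direct computation from \eqref{eq:expectation of part} using the telescoping $A(k/N) + (A((k+1)/N)-A(k/N)) = A((k+1)/N)$, and ensures that the scaling constants $c$ and $\delta$ appearing on the left-hand and right-hand sides of the inequality match up exactly.
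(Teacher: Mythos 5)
Your proposal is correct and follows essentially the same route as the paper: the one-step branching decomposition at the root, the observation that a small sum of nonnegative terms forces both terms to be small, independence of the two subtrees to square the probability, and the inclusion $\{MW\leq c\delta\}\subseteq\{M\leq\delta\}\cup\{W\leq c\}$ with a union bound. The normalization bookkeeping via the factorization $\EX{Z^{(k)}_{\beta,N-k}}=\EX{Z^{(k)}_{\beta,1}}\,\EX{Z^{(k+1)}_{\beta,N-k-1}}$ is exactly what the paper uses implicitly, so there is nothing to add.
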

\begin{proof}
Fix $N\in\N$. For every $c>0$, $\delta>0$ and $k\in \llbracket 0,N-1\rrbracket$, we have 
\begin{align}
&\PR{Z^{(k)}_{\beta,N-k} \leq c\delta\cdot \EX{Z^{(k)}_{\beta,N-k}} } \nonumber \\
&= \PR{\sum_{\abs{u}=1} M_{\beta,u}^{(k)} \cdot Z^{(k),u}_{\beta,N-k-1} \leq c\delta\cdot \EX{Z^{(k+1)}_{\beta,N-k-1}}} \nonumber \\
&\leq \PR{\forall \abs{u}=1 : M_{\beta,u}^{(k)} \cdot Z^{(k),u}_{\beta,N-k-1}  \leq c\delta\cdot \EX{Z^{(k+1)}_{\beta,N-k-1}}} \nonumber \\
&= \PR{M_{\beta,1}^{(k)}\cdot Z^{(k+1)}_{\beta,N-k-1} \leq c\delta\cdot \EX{Z^{(k+1)}_{\beta,N-k-1}}}^2 \label{eq:M independence} \\
&\leq \left[\PR{Z^{(k+1)}_{\beta,N-k-1} \leq c \cdot \EX{Z^{(k+1)}_{\beta,N-k-1}}} + \PR{M_{\beta,1}^{(k)} \leq \delta} \right]^2, \nonumber 
\end{align}
where \eqref{eq:M independence} follows from independence.
\end{proof}

The next lemma provides a uniform estimate for the left tail of $M_{\beta,1}^{(k)}$ when $k$ is small compared to $N$.
\begin{lemma}
\label{chap3.lem:one step}
Let $N\in\N$ and $K=\floor{2 \log_\gamma N}$ with $\gamma\in(11/10,2)$. For all $\beta>0$, $k\in\llbracket 0, K\rrbracket$ and $s>0$, there exist $N_0=N(A,\beta,s)\in\N$ and a constant $C=C(A,\beta,s)\geq 1$, independent of $N$, such that for all $N\geq N_0$,
\begin{align*}
\EX{(M^{(k)}_{\beta,1})^{-s}} \leq C.
\end{align*}
In particular, by Markov's inequality, this implies that for all $\delta>0$ and all $N\geq N_0$, 
\begin{align*}
    \PR{M^{(k)}_{\beta,1} \leq \delta} \leq C \cdot \delta^s.
\end{align*}
\end{lemma}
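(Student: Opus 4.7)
The proof proposal is essentially a one-line calculation once the right objects are identified, so the plan is short.

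First I would write $M_{\beta,1}^{(k)}$ explicitly. Since $X_1^{(k)}$ is a centered Gaussian with variance $\sigma_k^2 \coloneqq N\bigl(A((k+1)/N) - A(k/N)\bigr)$, we have $\EX{Z_{\beta,1}^{(k)}} = 2\,\EX{e^{\beta X_1^{(k)}}} = 2\exp(\beta^2 \sigma_k^2/2)$. Therefore
\begin{align*}
(M_{\beta,1}^{(k)})^{-s} = 2^s \exp(s\beta^2 \sigma_k^2/2)\, e^{-s\beta X_1^{(k)}}.
\end{align*}
Taking expectation and using the Gaussian MGF $\EX{e^{-s\beta X_1^{(k)}}} = \exp(s^2\beta^2 \sigma_k^2/2)$ gives
\begin{align*}
\EX{(M_{\beta,1}^{(k)})^{-s}} = 2^s \exp\Bigl(\tfrac{(s+s^2)\beta^2}{2}\,\sigma_k^2\Bigr).
\end{align*}

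Next I would bound $\sigma_k^2$ uniformly over $k \in \llbracket 0, K\rrbracket$ using Lemma~\ref{chap3.lem:near zero}. Pick $N_0$ large enough that $(K+1)/N \leq x_1$ for every $N \geq N_0$; then apply the lemma with $y = (k+1)/N$, $z = k/N$ and multiply through by $N$ to obtain
\begin{align*}
\sigma_k^2 \leq \hat{A}'(0) + C\bigl((k+1)/N\bigr)^\alpha + C/N^\alpha \leq \hat{A}'(0) + C'(K/N)^\alpha,
\end{align*}
where the right-hand side is independent of $k$. Since $K = \floor{2\log_\gamma N}$, the error term $(K/N)^\alpha$ tends to $0$, so after possibly enlarging $N_0$ we have $\sigma_k^2 \leq \hat{A}'(0) + 1$ for all $N \geq N_0$ and $k \in \llbracket 0, K\rrbracket$.

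Plugging this back yields $\EX{(M_{\beta,1}^{(k)})^{-s}} \leq 2^s \exp\bigl((s+s^2)\beta^2(\hat{A}'(0)+1)/2\bigr) =: C(A,\beta,s) \geq 1$, which is the claimed bound. The second statement is immediate from Markov's inequality: $\PR{M_{\beta,1}^{(k)} \leq \delta} = \PR{(M_{\beta,1}^{(k)})^{-s} \geq \delta^{-s}} \leq C\delta^s$.

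There is no real obstacle here: the whole content is that, for $k$ on the same scale as $K = O(\log N)$, the single-step increment sees only the region of $A$ near $0$, where Assumption~\ref{chap3.assumption}(ii)--(iii) forces its variance to be within $o(1)$ of $\hat{A}'(0)$. This makes the negative exponential moment of a Gaussian uniformly controlled in $N$ and $k$, and Assumption~\ref{chap3.assumption}(iii) is precisely what keeps the resulting constant finite.
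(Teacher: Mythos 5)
Your proposal is correct and follows essentially the same route as the paper: an explicit Gaussian computation of $\EX{e^{-s\beta X_1^{(k)}}}\EX{Z_{\beta,1}^{(k)}}^s$, a uniform bound $N\bigl(A((k+1)/N)-A(k/N)\bigr)\leq \hat{A}'(0)+1$ for $k\leq K=\floor{2\log_\gamma N}$ via Lemma~\ref{chap3.lem:near zero}, and then Markov's inequality. No issues to flag.
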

\begin{proof}
Let $k\in\llbracket 0,K\rrbracket$. By Lemma~\ref{chap3.lem:near zero}, there exist $N_1=N_1(A,\beta)\in\N$ and $C_1>0$ such that for all $N\geq N_1$,
\begin{align*}
N\left(A\left(\frac{k+1}{N}\right)-A\left(\frac{k}{N}\right)\right) 
&\leq \hat{A}'(0) + C_1 \frac{(k+1)^\alpha}{N^\alpha} + C_1\frac{1}{N^\alpha} \nonumber \\
&\leq \hat{A}'(0) + C_1 \frac{(K+1)^\alpha+1}{N^\alpha}.
\end{align*}
Since
\begin{align*}
    \limsup_{N\rightarrow\infty}C_1 \frac{(K+1)^\alpha+1}{N^\alpha} = 0,
\end{align*}
in particular, there exists $N_2=N_2(A,\beta)\in\N$ such that for all $N\geq N_2$,
\begin{align*}
N\left(A\left(\frac{k+1}{N}\right)-A\left(\frac{k}{N}\right)\right) \leq \hat{A}'(0) + 1.
\end{align*}
Let $N_0\coloneqq \max\{N_1,N_2\}$ and
\begin{align*}
C= C(A,\beta, s)\coloneqq \exp(\frac{\beta^2 s^2}{2} (\hat{A}'(0)+1)) 
\cdot 2^s \exp(\frac{\beta^2 s}{2} (\hat{A}'(0)+1)).
\end{align*}
Note that the constant above is greater or equal to $1$ as $\frac{\beta^2 s^2}{2} (\hat{A}'(0)+1)$, $s$ and $\frac{\beta^2 s}{2} (\hat{A}'(0)+1)$ are non negative.
For all $N\geq N_0$, we conclude that 
\begin{align*}
&\EX{(M^{(k)}_{\beta,1})^{-s}} \\
&= \EX{e^{-\beta s X^{(k)}_{1}}}\cdot \EX{Z^{(k)}_{\beta,1}}^s \\
&= \exp(\frac{\beta^2s^2}{2}N\left(A\left(\frac{k+1}{N}\right)-A\left(\frac{k}{N}\right)\right))\cdot 2^s \exp(\frac{\beta^2s}{2}N\left(A\left(\frac{k+1}{N}\right)-A\left(\frac{k}{N}\right)\right)) \\
&\leq \exp(\frac{\beta^2 s^2}{2} (\hat{A}'(0)+1)) 
\cdot 2^s \exp(\frac{\beta^2 s}{2} (\hat{A}'(0)+1)) = C,
\end{align*}
and this completes the proof.
\end{proof}

%\begin{remark}
%In fact, $M_{\beta,1}$ has negative moments for all $s$, but the negative $1$ moment is sufficient for our purpose.
%\end{remark}

We now turn to the proof of Proposition~\ref{chap3.prop:bootstrap}.

\begin{proof}[Proof of Proposition~\ref{chap3.prop:bootstrap}]
%For all $N\in\N$, let $K=\floor{2 \log_\gamma N}$. Define two sequences sequences $(\varepsilon_k)_{k=0}^K$ and $(\eta_k)_{k=0}^K$ as follows. 
Fix $\gamma \in (11/10,2)$ and $s>0$. Let $\eta_0=\eta(A,\beta)<1$ be the same as in Proposition~\ref{chap3.prop:init}, and for all $k\in\llbracket 1,K\rrbracket$, define $\eta_k = (\eta_0)^{\gamma^k}$. 
Let $\varepsilon_0=\frac{1}{2}$.  With $C=C(A,\beta,10s)\geq 1$ being the same constant appeared in Lemma~\ref{chap3.lem:one step}, for all $k\in\llbracket 1,K\rrbracket$, define $\varepsilon_k$ such that 
\begin{align*}
    (\varepsilon_k)^s = \frac{1}{2^s}\frac{1}{C^k}\prod_{n=0}^{k-1}((\eta_n)^{\gamma/2}-\eta_n)^{1/10}.
\end{align*}

Firstly, we claim that for all $k\in\llbracket 0,K\rrbracket$, $\ell\in\llbracket 0,K-k\rrbracket$ and $N\geq N_0$, where $N_0=N_0(A,\beta)$ appeared in the statement of Proposition~\ref{chap3.prop:init},
\begin{align}
    \PR{Z_{\beta,N-\ell}^{(\ell)} \leq \varepsilon_k \EX{Z_{\beta,N-\ell}^{(\ell)}}} \leq \eta_k. \label{chap3.eq:good inequality}
\end{align}
Note that \eqref{chap3.eq:good inequality} implies the (i) in the statement of Proposition~\ref{chap3.prop:bootstrap} by taking $\ell=0$.

We prove \eqref{chap3.eq:good inequality} by induction on $k$. The case where $k=0$ follows from Proposition~\ref{chap3.prop:init}. 
%\TODO{Add details}
Suppose that \eqref{chap3.eq:good inequality} is true for $k\in\llbracket 0,K-1\rrbracket$. 
As
\begin{align*}
    \varepsilon_{k+1} = \varepsilon_{k}\left(\frac{1}{C}(\eta_{k}^{\gamma/2}-\eta_{k})\right)^{\frac{1}{10s}},
\end{align*}
applying Lemma~\ref{chap3.lem:bootstrap} with $c=\varepsilon_{k}$ and $\delta=\left(\frac{1}{C}(\eta_{k}^{\gamma/2}-\eta_{k})\right)^{\frac{1}{10s}}$ yields that for all $\ell\in\llbracket 0,K-k-1\rrbracket$ and $N\geq N_0$,
\begin{align}
    &\PR{Z_{\beta,N-\ell}^{(\ell)}\leq \varepsilon_{k+1} \EX{Z_{\beta,N-\ell}^{(\ell)}}} \nonumber \\
    =
    &\PR{Z_{\beta,N-\ell}^{(\ell)}\leq \varepsilon_{k}\cdot \Big(\frac{1}{C}(\eta_{k}^{\gamma/2}-\eta_{k})\Big)^{1/s} \EX{Z_{\beta,N-\ell}^{(\ell)}}} \nonumber \\
    \leq &\left(\PR{Z_{\beta,N-\ell-1}^{(\ell+1)}\leq \varepsilon_{k} \EX{Z_{\beta,N-\ell-1}^{(\ell+1)}}} + \PR{M_{\beta,1}^{(\ell)} \leq \left(\frac{1}{C}(\eta_{k}^{\gamma/2}-\eta_{k})\right)^{\frac{1}{10s}}} \right)^2
    \nonumber \\
    \leq& (\eta_{k}+\eta_{k}^{\gamma/2}-\eta_{k})^2 
    = (\eta_{k})^{\gamma} 
    = \eta_{k+1}, \label{chap3.eq:eta_r}
\end{align}
where the inequality in \eqref{chap3.eq:eta_r} follows from the induction hypothesis and Lemma~\ref{chap3.lem:one step}.
Thus, by induction, \eqref{chap3.eq:good inequality} holds for all $k\in\llbracket 0,K\rrbracket$.

It remains to show that for all $s>0$, $\sum_{k=0}^K (\varepsilon_{k+1})^{-s}\eta_{k}$ is uniformly bounded in $K$. For all $k\in\llbracket 0,K\rrbracket$,
\begin{align}
    (\varepsilon_{k+1})^{-s}
    &= 2^{s}C^{k+1}\prod_{n=0}^k (\eta_n^{\gamma/2}-\eta_n)^{-1/10} \nonumber \\
    &= 2^{s}C^{k+1}\prod_{n=0}^k \eta_n^{-\gamma/20}(1-\eta_n^{1-\gamma/2})^{-(k+1)/10} \nonumber \\
    &\leq 2^{s}C^{k+1} (1-\eta_0^{1-\gamma/2})^{-1/10}\prod_{n=0}^k \eta_n^{-\gamma/20} \label{chap3.eq:eps_n.0} \\
    &= 2^{s}C^{k+1} (1-\eta_0^{1-\gamma/2})^{-(k+1)/10}\eta_0^{-\frac{\gamma}{20}\frac{\gamma^{k+1}-1}{\gamma-1}} \nonumber \\
    &= 
    C_1 C^{k+1} (1-\eta_0^{1-\gamma/2})^{-(k+1)/10}
    \eta_0^{-\frac{\gamma^2}{20(\gamma-1)}\gamma^k}
    \label{chap3.eq:eps_n}
\end{align}
where $C_1=C_1(s,\eta_0,\gamma)=2^{s}\eta_0^{\frac{\gamma}{20(\gamma-1)}}$ 
and
\eqref{chap3.eq:eps_n.0} follows from the definition of $\eta_n$ and the fact that $(\eta_k)$ is a decreasing sequence which implies in particular that $\eta_k\leq \eta_0$ for all $k\in\llbracket 0,K\rrbracket$.
Applying \eqref{chap3.eq:eps_n}, 
\begin{align}
    &\sum_{k=0}^{K-1} (\varepsilon_{k+1})^{-s}\eta_{k} \nonumber \\
    \leq& C_1\sum_{k=0}^{K-1} 
    C^{k+1} (1-\eta_0^{1-\gamma/2})^{-(k+1)/10}
    \eta_0^{-\frac{\gamma^2}{20(\gamma-1)}\gamma^k} \eta_k \nonumber \\
    =& C_1\sum_{k=0}^{K-1} 
    C^{k+1} (1-\eta_0^{1-\gamma/2})^{-(k+1)/10}
    \eta_0^{(1-\frac{\gamma^2}{20(\gamma-1)})\gamma^k} \label{eq:neg moment.20} \\
    \leq& C_1\sum_{k=0}^{\infty} 
    C^{k+1} (1-\eta_0^{1-\gamma/2})^{-(k+1)/10}
    \eta_0^{(1-\frac{\gamma^2}{20(\gamma-1)})\gamma^k} \nonumber \\
    \leq&
    C_1 \sum_{k=0}^\infty \exp(\log(\frac{C}{(1-\eta_0^{1-\gamma/2})^{1/10}})(k+1)+(1-\frac{\gamma^2}{20(\gamma-1)})(\log\eta_0)\gamma^k)
    \label{chap3.eq:neg moment.2}   
\end{align}
where \eqref{eq:neg moment.20} follows from the definition of $\eta_k$.
Since $\gamma\in (11/10,2)$ and $\eta_0 < 1$, $1-\frac{\gamma^2}{20(\gamma-1)}>0$ and $\log \eta_0 < 1$. Therefore, 
\eqref{chap3.eq:neg moment.2} depends only on $s$, $\gamma\in (11/10,2)$ and $\eta_0=\eta_0(A,\beta)$, and we conclude that $\sum_{k=0}^K (\varepsilon_{k+1})^{-s}\eta_{k}$ is bounded from above by a constant $C_2=C_2(A,\beta,s)>0$ as desired.
\end{proof}

\section{Proof of Theorem~\ref{chap3.thm:main}} \label{chap3.sec:proof of main}

Suppose that $\beta<\beta_c$. Let $N\in\N$. Fix $K=\floor{2 \log_\gamma N}$ and $s>0$. 
Also, we define $W_{\beta,N}\coloneqq Z_{\beta,N}/\EX{Z_{\beta,N}}$.
It suffice to prove that there exists a constant $C=C(A,\beta,s)>0$ such that 
\begin{align*}
\EX{(W_{\beta,N})^{-s}} \leq C.
\end{align*}
We have
\begin{align}
    &\EX{(W_{\beta,N})^{-s}} \nonumber \\
    &\leq \varepsilon_0^{-s}\PR{W_{\beta,N}>\varepsilon_0} + \sum_{k=0}^{K-1} (\varepsilon_{k+1})^{-s} \cdot \PR{\varepsilon_{k+1} \leq W_{\beta,N}\leq \varepsilon_k}
    + \EX{(W_{\beta,N})^{-s}\Ind_{W_{\beta,N}\leq \varepsilon_K}} \nonumber \\
    &\leq 2^s + \sum_{n=0}^{K-1} (\varepsilon_{k+1})^{-s} \cdot \PR{W_{\beta,N}\leq \varepsilon_k}
    + \EX{(W_{\beta,N})^{-s}\Ind_{W_{\beta,N}\leq \varepsilon_K}}.
    \label{chap3.eq:neg moment.1}
\end{align}
By Proposition~\ref{chap3.prop:bootstrap}, there exist $N_1=N(A,\beta,s)\in\N$ and $C_1=C_1(A,\beta,s)>0$ such that for all $N\geq N_1$ the second term of \eqref{chap3.eq:neg moment.1} is bounded by $C_1$. Thus, it remains to bound the third term of \eqref{chap3.eq:neg moment.1}. 

For all $\abs{u}=N$,
\begin{align}
    Z_{\beta,N} \geq e^{\beta X_u}. \label{chap3.eq:one particle}
\end{align}
Fix a vertex $u$ with $\abs{u}=N$. By \eqref{chap3.eq:one particle} and Lemma~\ref{lem:exponential tilting}, the second term of \eqref{chap3.eq:neg moment.1} is bounded from above by 
\begin{align}
    \EX{(W_{\beta,N})^{-s}\Ind_{W_{\beta,N}\leq \varepsilon_K}}
    &\leq \EX{Z_{\beta,N}}^{s}\cdot \EX{(e^{\beta X_u})^{-s}\Ind_{e^{\beta X_u} \leq \varepsilon_K\, \EX{Z_{\beta,N}}}} \nonumber \\
    &= \EX{Z_{\beta,N}}^{s}\cdot \EX{e^{-\beta s X_u}\Ind_{X_u \leq \frac{1}{\beta}(\log \varepsilon_K + \log \EX{Z_{\beta,N}})}} \nonumber \\
    &= \EX{Z_{\beta,N}}^{s} \EX{e^{-\beta s X_u}\Ind_{X_u \leq \frac{1}{\beta}(\log \varepsilon_K + N\log 2 + N\frac{\beta^2}{2})}} \nonumber \\
    &= \exp(\frac{\beta^2sN}{2})
    \EX{e^{-\beta s X_u}\Ind_{X_u \leq \frac{1}{\beta}(\log \varepsilon_K + N\log 2 + N\frac{\beta^2}{2})}}
    \label{eq:two s moments}
\end{align}
Note that $X_u$ appearing in the expectation of \eqref{eq:two s moments} is distributed as a centered Gaussian variable with variance $N$, so the second expectation of \eqref{eq:two s moments} can be rewritten as
\begin{align}
    &\EX{e^{-\beta s X_u}\Ind_{X_u \leq \frac{1}{\beta}(\log \varepsilon_K + N\log 2 + N\frac{\beta^2}{2})}} \nonumber \\
    &=
    \frac{1}{\sqrt{2\pi N}}
    \int_{-\infty}^{\frac{1}{\beta}(\log \varepsilon_K + N\log 2 + N\frac{\beta^2}{2})}
    e^{-\beta s x}e^{-\frac{x^2}{2N}} \dd{x} \nonumber \\
    &=e^{\frac{\beta^2s^2}{2}N}
    \frac{1}{\sqrt{2\pi N}}
    \int_{-\infty}^{\frac{1}{\beta}(\log \varepsilon_K + N\log 2 + N\frac{\beta^2}{2})}
    e^{-\frac{(x+\beta s N)^2}{2N}} \dd{x} \nonumber \\
    &=e^{\frac{\beta^2s^2}{2}N}
    \frac{1}{\sqrt{2\pi N}}
    \int_{-\frac{1}{\beta\sqrt{N}}(\log \varepsilon_K + N\log 2 + N\frac{\beta^2}{2}+N \beta s)}^\infty
    e^{-\frac{y^2}{2}} \dd{y} \label{eq:Gaussian integral} \\
    &= 
    e^{\frac{\beta^2s^2}{2}N}
    \frac{1}{\sqrt{2\pi N}}
    \int_{r(N)}^\infty
    e^{-\frac{y^2}{2}} \dd{y},  \label{eq:def of r(N)}
    % &\leq 
    % e^{\frac{\beta^2s^2}{2}N}
    % \frac{1}{(2\pi)^{1/2}r(N)} \exp(-\frac{r(N)^2}{2N}), \label{eq:def of r(N)}
\end{align}
where \eqref{eq:Gaussian integral} follows from the change of variables $y=(x+\beta s)/\sqrt{N}$ and $r(N)$ in \eqref{eq:def of r(N)} is defined as $r(N)=-\frac{1}{\beta\sqrt{N}}(\log \varepsilon_K + N\log 2 + N\frac{\beta^2}{2}+N \beta s)$.

Recall the definition of $(\varepsilon_k)_{k=0}^K$ in the proof of Proposition~\ref{chap3.prop:bootstrap}, we have
\begin{align}
    \varepsilon_K
    = \frac{1}{2}\frac{1}{C^K}\prod_{n=0}^{K-1}((\eta_n)^{\gamma/2}-\eta_n)^{1/(10s)}
    \leq 
    \prod_{n=0}^{K-1}\eta_n^{\gamma/(20s)}
    =
    \prod_{n=0}^{K-1}\eta_0^{\gamma^{n+1}/(20s)}
    , \label{eq:eps bound}
\end{align}
where the inequality above follows from the fact that $1/2\geq 1$, $C\geq 1$ and $\eta_n\geq 0$ for all $n\in\llbracket 0,K-1\rrbracket$. Recall that $K=\floor{2\log_\gamma N}$, so
\begin{align}
    \eqref{eq:eps bound}
    =
    \eta_0^{\frac{\gamma}{20s}\frac{\gamma^K-1}{\gamma-1}}
    \leq 
    \eta_0^{\frac{\gamma}{20(\gamma-1)s}N^2}
    . \label{eq:eps bound.1}
\end{align}
Then, \eqref{eq:eps bound} and \eqref{eq:eps bound.1} imply that
\begin{align}
    r(N)
    &\geq 
    -\frac{1}{\beta\sqrt{N}}
    \left( 
    \frac{\gamma}{20(\gamma-1)s}(\log \eta_0)N^2
    + N\log 2 + N\frac{\beta^2}{2}+N \beta s
    \right) \nonumber \\
    &= c_1 N^{3/2} - (\frac{\log 2}{\beta} + \frac{\beta}{2} + s) N^{1/2},
\end{align}
and since $\eta_0<1$, we have
\begin{align}
    c_1 = -\frac{\gamma\log(\eta_0)}{20\beta(\gamma-1)s} >0.
\end{align}
Therefore, there exists $N_2=N_2(A,\beta,s)$ such that for all $N\geq N_2$,
$r(N)\geq \frac{c_1}{2} N^{3/2}$, and this implies that for $N\geq N_2$, we have
\begin{align}
    \int_{r(N)}^\infty
    e^{-\frac{y^2}{2}} \dd{y}
    \leq 
    \int_{\frac{c_1}{2}N^{3/2}}^\infty e^{-\frac{x^2}{2}} \dd{x}
    \leq 
    \frac{1}{(c_1/2) N^{3/2}} e^{-\frac{c_1^2N^3}{8}}, \label{eq:Gaussian tail}
\end{align}
where the second inequality follows from a standard estimate for Gaussian tails (see, for example, Theorem~1.2.6~of~\cite{Durret}). Combining \eqref{eq:two s moments}, \eqref{eq:def of r(N)} and \eqref{eq:Gaussian tail}, we see that 
\begin{align}
     &\limsup_{N\rightarrow\infty}
     \EX{(W_{\beta,N})^{-s}\Ind_{W_{\beta,N}}} \nonumber \\
     &\leq 
     \limsup_{N\rightarrow\infty}
     \left(
     e^{\frac{\beta^2sN}{2}} \cdot e^{\frac{\beta^2s^2}{2}N}
    \frac{1}{\sqrt{2\pi N}}
    \cdot 
    \frac{1}{(c_1/2) N^{3/2}} e^{-\frac{c_1^2N^3}{8}}
    \right)
    = 0. \label{eq:second term limsup}
\end{align}
From \eqref{chap3.eq:neg moment.1} and \eqref{eq:second term limsup}, we conclude that there exists $N_0 = N_0(A,\beta,s)\geq \max\{N_1,N_2\}$ such that for $N\geq N_0$,
\begin{align*}
\EX{(W_{\beta,N})^{-s}}\leq 2^s + C_1 + 1.
\end{align*}
By taking $C = 2^s + C_1+1$, the proof is completed.

\subsubsection*{Acknowledgments}

I would like to thank Pascal Maillard for his guidance throughout the whole project and Michel Pain for his comments on the early draft of the paper. I also appreciate the constructive suggestions from an anonymous referee, which have substantially improved the quality of the paper.

\end{document}